\newcommand{\sgn}{\mathrm{sgn}}
\newcommand{\const}{\mathrm{const}}
\newcommand{\inters}{\mathrm{int}}
\renewcommand{\Im}{\mathrm{Im}}
\renewcommand{\Re}{\mathrm{Re}}
\newtheorem{lemma}{Lemma}
\newtheorem{theorem}{Theorem}
\numberwithin{equation}{section}
\numberwithin{lemma}{section}
\numberwithin{theorem}{section}
\begin{document}
%\pagedegarde{A large time asymptotics for transparent potentials for the Novikov--Veselov equation at positive energy}{Anna Kazeykina, Roman Novikov}{694}{October 2010}

\begin{center}
\large A large time asymptotics for transparent potentials for the Novikov-Veselov equation at positive energy
\end{center}

\begin{center}
A. V. Kazeykina\footnote{Centre des Math\'ematiques Appliqu\'ees, Ecole Polytechnique, Palaiseau, 91128, France \\ email: kazeykina@cmap.polytechnique.fr} and R. G. Novikov\footnote{Centre des Math\'ematiques Appliqu\'ees, Ecole Polytechnique, Palaiseau, 91128, France \\ email: novikov@cmap.polytechnique.fr}
\end{center}

\textbf{Abstract.} In the present paper we begin studies on the large time asymptotic behavior for solutions of the Cauchy problem for the Novikov--Veselov equation (an analog of KdV in $ 2 + 1 $ dimensions) at positive energy. In addition, we are focused on a family of reflectionless (transparent) potentials parameterized by a function of two variables. In particular, we show that there are no isolated soliton type waves in the large time asymptotics for these solutions in contrast with well-known large time asymptotics for solutions of the KdV equation with reflectionless initial data.

\section{Introduction}
We consider the scattering problem for the two-dimensional Schr\"odinger equation
\begin{equation}
\label{schrodinger}
- \Delta \psi + v( x ) \psi = E \psi, \quad x \in \mathbb{R}^2, \quad E = E_{fixed} > 0
\end{equation}
at a fixed positive energy, where
\begin{equation}
\label{vconditions}
\begin{aligned}
& v( x ) = \overline{ v( x ) }, \quad v \in L^{ \infty }( \mathbb{R}^2 ), \\
& | v( x ) | < q ( 1 + | x | )^{ - 2 - \varepsilon }, \quad \varepsilon > 0, \; q > 0.
\end{aligned}
\end{equation}
It is known that for any $ k \in \mathbb{R}^2 $, such that $ k^2 = E $, there exists a unique bounded solution $ \psi^+( x, k ) $ of equation (\ref{schrodinger}) with the following asymptotics
\begin{equation}
\begin{aligned}
\label{scat_amplitude}
\psi^+( x, k ) = & \: e^{ i k x } - i \pi \sqrt{ 2 \pi } \, e^{ - \tfrac{ i \pi }{ 4 } } f\left( k, | k | \frac{ x }{ | x | } \right) \! \frac{ e^{ i | k | | x | } }{ \sqrt{ | k | | x | } } \; + \\
& + o\left( \frac{ 1 }{ \sqrt{ | x | } } \right), \quad | x | \to +\infty.
\end{aligned}
\end{equation}
This solution describes scattering of incident plane wave $ e^{ i k x } $ on the potential $ v( x ) $. The function $ f = f( k, l ) $, $ k \in \mathbb{R}^2 $, $ l \in \mathbb{R}^2 $, $ k^2 = l^2 = E $, arising in (\ref{scat_amplitude}), is the scattering amplitude for $ v( x ) $ in the framework of equation (\ref{schrodinger}).

In the present paper we are focused on transparent (or invisible) potentials $ v $ for equation (\ref{schrodinger}). We say that $ v $ is transparent if its scattering amplitude $ f $ is identically zero at fixed energy $ E $. We consider transparent potentials for equation (\ref{schrodinger}) as analogs of reflectionless potentials for the one--dimensional Schr\"odinger equation at all positive energies; see, for example, \cite{S}, \cite{ZM} as regards reflectionless potentials in dimension one.

In \cite{GN} it was shown that
\begin{enumerate}
\item There are no nonzero transparent potentials for equation (\ref{schrodinger}), where
\begin{equation}
\label{exp_potentials}
v( x ) = \overline{ v( x ) }, \quad v \in L^{ \infty }( \mathbb{R}^2 ), \quad | v( x ) | < \alpha e^{ \beta | x | }, \quad \alpha > 0, \, \beta > 0.
\end{equation}
\item There is a family of nonzero transparent potentials for equation (\ref{schrodinger}), where
\begin{equation}
\label{schwartz_potentials}
v( x ) = \overline{ v( x ) }, \quad v \in \mathcal{S}( \mathbb{R}^2 ),
\end{equation}
and $ \mathcal{S} $ denotes the Schwartz class.
\end{enumerate}

In the present paper, in addition to the scattering problem for (\ref{schrodinger}), we consider its isospectral deformation generated by the following $ ( 2 + 1 ) $--dimensional analog of the KdV equation:
\begin{gather}
\notag
\partial_t v = 4 \Re ( 4 \partial_z^3 v + \partial_z( v w ) - E \partial_z w ),\\
\label{MNV}
\partial_{ \bar z } w = - 3 \partial_z v, \quad v = \bar v, \\
\notag
v = v( x, t ), \quad w = w( x, t ), \quad x = ( x_1, x_2 ) \in \mathbb{R}^2, \quad t \in \mathbb{R},
\end{gather}
where
\begin{equation*}
\partial_t = \frac{ \partial }{ \partial t }, \quad \partial_z = \frac{ 1 }{ 2 } \left( \frac{ \partial }{ \partial x_1 } - i \frac{ \partial }{ \partial x_2 } \right), \quad \partial_{ \bar z } = \frac{ 1 }{ 2 } \left( \frac{ \partial }{ \partial x_1 } + i \frac{ \partial }{ \partial x_2 } \right).
\end{equation*}

Equation (\ref{MNV}) is contained implicitly in the paper of S.V. Manakov \cite{M} as an equation possessing the following representation:
\begin{equation}
\label{LAB}
\frac{ \partial ( L - E ) }{ \partial t } = [ L - E, A ] + B( L - E ),
\end{equation}
(Manakov $ L - A - B $ triple), where $ L = - \Delta + v( x, t ) $ or, in other words, $ L $ (at fixed $ t $) is the Schr\"odinger operator of (\ref{schrodinger}), $ A $ and $ B $ are suitable differential operators of the third and the zero order respectively. Equation (\ref{MNV}) was written in an explicit form by S.P. Novikov and A.P. Veselov in \cite{NV1}, \cite{NV2}, where higher analogs of (\ref{MNV}) were also constructed.

Note that both Kadomtsev--Petviashvili equations can be obtained from (\ref{MNV}) by considering an appropriate limit $ E \to \pm \infty $ (V.E. Zakharov).

In terms of scattering data the nonlinear equation (\ref{MNV}), where $ E = E_{ fixed } > 0 $,
\begin{equation}
\label{text_assumptions}
\begin{aligned}
& \text{$ v $ is sufficiently regular and has sufficient decay as $ | x | \to \infty $,}\\
& \text{$ w $ is decaying as $ | x | \to \infty $,}
\end{aligned}
\end{equation}
takes the form (\ref{b_dynamics})--(\ref{f_dynamics}) (see Section \ref{transparent_potentials}) and, in particular,
\begin{equation}
\label{scattering_dynamics}
\frac{ \partial f( k, l, t ) }{ \partial t } = 2 i \left[ k_1^3 - 3 k_1 k_2^2 - l_1^3 + 3 l_1 l_2^2 \right] f( k, l, t ),
\end{equation}
$ k = ( k_1, k_2 ) \in \mathbb{R}^2 $, $ l = ( l_1, l_2 ) \in \mathbb{R}^2 $, $ k^2 = l^2 = E $, where $ f( \cdot, \cdot, t ) $ is the scattering amplitude for $ v( \cdot, t ) $. Equation (\ref{scattering_dynamics}) implies that the nonlinear evolution equation (\ref{MNV}) under assumptions (\ref{text_assumptions}) preserves the transparency (or invisibility) property of $ v( x, 0 ) $ in the framework of the scattering problem for (\ref{schrodinger}).

In the present paper we begin studies on the large time asymptotic behavior for solutions of the Cauchy problem for (\ref{MNV}) under assumptions (\ref{text_assumptions}). We give a large time estimate for the family of solutions of (\ref{MNV}) with $ E = E_{ fixed } > 0 $ given by (\ref{mu_solve})--(\ref{v_main_formula}) and parameterized  by a function of two variables. All potentials of this family are transparent at fixed $ t $ and $ E $. In addition, this family contains all solutions of (\ref{MNV}) with $ E = E_{ fixed } > 0 $ such that:
\begin{align}
\label{schwartz}
& \bullet \, v( \cdot, 0 ) \in \mathcal{S}( \mathbb{R}^2 ), \; \text{where $ \mathcal{S} $ denotes the Schwartz class}, \\
\label{transparency}
& \bullet \, v( \cdot, 0 ) \; \text{is transparent for (\ref{schrodinger})}, \\
\label{small_norm}
& \bullet \, v( \cdot, 0 ) \; \text{satisfies (\ref{vconditions}), where}
\end{align}
\begin{equation}
\label{qQ}
q < Q( E, \varepsilon ) \quad \text{"small norm" condition},
\end{equation}
and $ Q $ is a special real function with the properties $ Q( E, \varepsilon ) > 0 $ as $ E > 0 $, $ \varepsilon > 0 $, $ Q( E, \varepsilon ) \to +\infty $ for fixed $ \varepsilon > 0 $ as $ E \to +\infty $,

\rule{24pt}{0pt}$ \bullet \; v,  w \in C^{ \infty }( \mathbb{R}^2 \times \mathbb{R} ) $, $ \partial_x^j v( x, t ) = O\left( | x |^{ -3 } \right) $, $ w( x, t ) = o( 1 ) $, $ | x | \to \infty $, for $ t \in \mathbb{R} $, $ j \in ( \mathbb{N} \bigcup 0 )^2 $.

%\begin{itemize}
%\item $ v( \cdot, 0 ) \in \mathcal{S}( \mathbb{R}^2 ) $, where $ \mathcal{S} $ denotes the Schwartz class,
%\item $ v( \cdot, 0 ) $ is transparent for (\ref{schrodinger}),
%\item $ v( \cdot, 0 ) $ satisfies (\ref{vconditions}), where
%\begin{equation}
%\label{qQ}
%q < Q( E, \varepsilon ) \quad \text{"small norm" condition},
%\end{equation}
%and $ Q $ is a special real function with the properties $ Q( E, \varepsilon ) > 0 $ as $ E > 0 $, $ \varepsilon > 0 $, $ Q( E, \varepsilon ) \to +\infty $ for fixed $ \varepsilon > 0 $ as $ E \to +\infty $,
%\item $ v,  w \in C^{ \infty }( \mathbb{R}^2 \times \mathbb{R} ) $, $ \partial_x^j v( x, t ) = O\left( | x |^{ -3 } \right) $, $ w( x, t ) = o( 1 ) $, $ | x | \to \infty $, for $ t \in \mathbb{R} $, $ j \in ( \mathbb{N} \bigcup 0 )^2 $.
%\end{itemize}

The aforementioned family of solutions of (\ref{MNV}) was considered for the first time in \cite{GN}. In the present work we prove the following estimate
\begin{equation}
\label{main_estimate}
| v( x, t ) | \leqslant \frac{ \const ( v ) \ln( 3 + | t | ) }{ 1 + | t | }, \quad x \in \mathbb{R}^2, t \in \mathbb{R}
\end{equation}
for each $ v $ of this family.

Estimate (\ref{main_estimate}) implies that there are no isolated soliton type waves in the large time asymptotics for $ v( x, t ) $, in contrast with large time asymptotics for solutions of the KdV equation with reflectionless initial data.

Apparently, it is not difficult to obtain the estimate (\ref{main_estimate}), where the right--hand side is replaced by $ \frac{ \const( v ) }{ 1 + | t | } $ and even to give precise expression for the leading term of the asymptotics of $ v( x, t ) $ as $ t \to \infty $. However, already (\ref{main_estimate}) in its present form implies the aforementioned absence of isolated soliton--type waves in the large--time asymptotics for $ v( x, t ) $.

Studies on the large time asymptotics for solutions of the Cauchy problem for the Kadomtsev--Petviashvili equations were fulfilled in \cite{K1}, \cite{K2}, \cite{K}.

Estimate (\ref{main_estimate}) is proved in sections 3--4, using the stationary phase method, techniques developed in \cite{GN} and \cite{K} and an analysis of some cubic algebraic equation depending on a complex parameter.

\section{Transparent Potentials and Inverse Scattering Transform}
\label{transparent_potentials}
In order to study the large time behavior of the family of transparent potentials described in the previous section, we will use the inverse scattering transform for the two-dimensional Schr\"odinger equation (\ref{schrodinger}) described in \cite{GN}.

First, we give the complete definition of scattering data for (\ref{schrodinger}). Let $ k \in \mathbb{C}^2 $, $ k^2 = E $, $ \Im k \neq 0 $ and $ v( x ) $ satisfy conditions (\ref{vconditions}) and (\ref{qQ}). Then there exists a unique solution of (\ref{schrodinger}) such that
\begin{equation*}
\psi( x, k ) = e^{ i k x } ( 1 + o( 1 ) ), \quad | x | \to \infty.
\end{equation*}
It can be shown (\cite{N2}) that for $ E \in \mathbb{R} $, $ \Im k \neq 0 $ the function $ \psi( x, k ) $ can be expanded as
\begin{equation*}
\psi( k, x ) = e^{ i k x } - \pi \sgn( \Im k_2, \bar k_1 ) e^{ i k x } \left( \frac{ a( k ) }{ - k_2 x_1 + k_1 x_2 } + \frac{ e^{ -2 i \Re k x } b( k ) }{ - \bar k_2 x_1 + \bar k_1 x_2  } + o\left( \frac{ 1 }{ | x | } \right) \right).
\end{equation*}

For $ E > 0 $ the function $ b( k ) $ is considered to be scattering data for (\ref{schrodinger}) in addition to the scattering amplitude $ f( k, l ) $ arising in (\ref{scat_amplitude}). It was shown in \cite{GM,N1,N2} that at fixed positive energy $ f( k, l ) $ and $ b( k ) $ uniquely determine the potential $ v( x ) $ satisfying (\ref{vconditions}), (\ref{qQ}) (while $ f( k, l ) $ alone is insufficient for this purpose).

If potential $ v( x, t ) $ satisfies the Novikov--Veselov equation (\ref{MNV}) under assumptions (\ref{text_assumptions}), then dynamics of the scattering data is described by the following formulas
\begin{align}
\label{b_dynamics}
& \frac{ \partial b( k, t ) }{ \partial t } = 2 i \left[ k_1^3 + \bar k_1^3 - 3 k_1 k_2^2 + 3 \bar k_1 \bar k_2^2 \right] b( k, t ), \quad k \in \mathbb{C}^2, \Im k \neq 0, k^2 = E,\\
\label{f_dynamics}
& \frac{ \partial f( k, l, t ) }{ \partial t } = 2 i \left[ k_1^3 - 3 k_1 k_2^2 - l_1^3 + 3 l_1 l_2^2 \right] f( k, l, t ), \quad k, l \in \mathbb{R}^2, k^2 = l^2 = E.
\end{align}
Equation (\ref{f_dynamics}) implies that the Novikov--Veselov equation preserves the property of transparency. It was also shown in \cite{GN} that this equation does not preserve, in general, the property of very fast decay of initial data. It can only be guaranteed that if $ v( x, 0 ) \in \mathcal{S}( \mathbb{R}^2 ) $ and satisfies (\ref{vconditions}), (\ref{qQ}), then for every $ t $ we have $ v( x, t ) = O( | x |^{ -3 } ) $.

In the present paper we are concerned with transparent potentials, i.e. in the further considerations we assume that $ f( k, l, t ) \equiv 0 $. We will also put
\begin{equation}
\label{Eone}
E = 1
\end{equation}
without loss of generality (the case of an arbitrary fixed positive energy may be reduced to (\ref{Eone}) by scaling transformation). Along with the function $ \psi( x, k ) $ we will consider the function $ \mu( x, k ) $ related to $ \psi( x, k ) $ by the following expression
\begin{equation}
\label{psi_mu}
\psi( x, k ) = e^{ i k x } \mu( x, k ).
\end{equation}
It is also convenient in the two-dimensional scattering theory to introduce new notations
\begin{equation*}
z = x_1 + i x_2, \quad \bar z = x_1 - i x_2, \quad \lambda = k_1 + i k_2.
\end{equation*}
Then
\begin{equation*}
k_1 = \frac{ 1 }{ 2 } \left( \lambda + \frac{ 1 }{ \lambda } \right), \quad k_2 = \frac{ i }{ 2 } \left( \frac{ 1 }{ \lambda } - \lambda \right)
\end{equation*}
and we will consider that
\begin{equation*}
\psi = \psi( z, \lambda, t ), \quad \mu = \mu( z, \lambda, t ), \quad b = b( \lambda, t ).
\end{equation*}
In new notations the Schr\"odinger equation takes the form
\begin{equation*}
L \psi = E \psi, \quad L = - 4 \partial_{ z } \partial_{ \bar z } + v( z, t ), \quad z \in \mathbb{C},
\end{equation*}
equation (\ref{b_dynamics}) is written as
\begin{equation*}
\frac{ \partial b( \lambda, t ) }{ \partial t } = i \left( \lambda^3 + \frac{ 1 }{ \lambda^3 } + \bar \lambda^3 + \frac{ 1 }{ \bar \lambda^3 } \right) b( \lambda, t ),
\end{equation*}
and (\ref{psi_mu}) takes the form
\begin{equation*}
\psi( z, \lambda, t ) = e^{ \frac{ i }{ 2 }( \lambda \bar z + z / \lambda ) } \mu( z, \lambda, t ).
\end{equation*}
We also note that in the present paper notation $ f( z ) $ does not imply that $ f $ is holomorphic on $ z $, i.e. we omit the dependency on $ \bar z $ in the notations.

Let a transparent potential $ v( x, t ) $ satisfy at $ t = 0 $ conditions (\ref{vconditions}), (\ref{qQ}). Then the function $ \mu( z, \lambda, t ) $ has the following properties (see \cite{GN}):
\begin{enumerate}
\item $ \mu( z, \lambda, t ) $ is continuous on $ \lambda \in \mathbb{C} $;
\item $ \forall \lambda \in \mathbb{C} $, $ | \lambda | \neq 1 $, the function $ \mu( z, \lambda, t ) $ satisfies the equation
\begin{equation}
\label{bar_d}
\frac{ \partial \mu( z, \lambda, t ) }{ \partial \bar \lambda } = r( \lambda, z , t ) \overline{ \mu( z, \lambda, t ) },
\end{equation}
where
\begin{gather}
\label{r_b_connection}
r( \lambda, z, t ) = \exp( i S( \lambda, z, t ) ) r( \lambda ), \quad r( \lambda ) = \frac{ \pi \sgn ( \lambda \bar \lambda - 1 ) }{ \bar \lambda } b( \lambda, 0 ),\\
\label{s_definition}
S( \lambda, z, t ) = \left\{ -\frac{1}{2} \left( \bar \lambda z + \lambda \bar z + \frac{ z }{ \lambda } + \frac{ \bar z }{ \bar \lambda } \right) \right\} + \left\{ t \left( \lambda^3 + \bar \lambda^3 + \frac{ 1 }{ \lambda^3 } + \frac{ 1 }{ \bar \lambda^3 } \right) \right\};
\end{gather}
\item $ \mu( z, \lambda, t ) \to 1 $ as $ \lambda \to 0, \infty $.
\end{enumerate}
Properties 1--3 uniquely determine $ \mu( z, \lambda, t ) $ for all $ \lambda \in \mathbb{C} $.

Under the same assumptions on the potential and if $ v( x, 0 ) \in \mathcal{S}( \mathbb{R}^2 ) $, the function $ b( \lambda, t ) $ has the following properties: for every $ t \in \mathbb{R} $
\begin{gather}
\label{b_continuity}
b( \cdot, t ) \in \mathcal{S}( \mathbb{C} ); \\
\label{b_symmetry}
b( 1 / \bar \lambda, t ) = b( \lambda, t ), \quad b( -1 / \bar \lambda, t ) = \overline{ b( \lambda, t ) }; \\
\label{b_derivs}
\left. \partial^{ m }_{ \lambda } \partial^{ n }_{ \bar \lambda } b( \lambda, t ) \right|_{ | \lambda | = 1 } = 0 \quad \text{for all} \; m, n \geq 0.
\end{gather}

The reconstruction of the transparent potential $ v( z, t ) $ from these scattering data is based on the following scheme.
\begin{enumerate}
\item Function $ \mu( z, \lambda, t ) $ is constructed as the solution of the following integral equation
\begin{equation}
\label{mu_solve}
\mu( z, \lambda, t ) = 1 - \frac{ 1 }{ \pi } \iint_{ \mathbb{C} } r( \zeta, z , t ) \overline{ \mu( z, \zeta, t ) } \frac{ d \Re \zeta d \Im \zeta }{ \zeta - \lambda }.
\end{equation}
which is uniquely solvable if the scattering data $ b( \lambda, t ) $ satisfy properties (\ref{b_continuity})--(\ref{b_derivs}). Equation (\ref{mu_solve}) is obtained from (\ref{bar_d}) by applying the Cauchy-Green formula
\begin{equation*}
f( \lambda ) = -\frac{ 1 }{ \pi } \iint\limits_{ D } ( \partial_{ \bar \zeta } f( \zeta ) ) \frac{ d \Re \zeta d \Im \zeta }{ \zeta - \lambda } + \frac{ 1 }{ 2 \pi i } \oint\limits_{ \partial D } f( \zeta ) \frac{ d \zeta }{ \zeta - \lambda }.
\end{equation*}
\item Expanding $ \mu( z, \lambda, t ) $ as $ \lambda \to \infty $,
\begin{equation}
\label{mu_expansion}
\mu( z, \lambda, t ) = 1 + \frac{ \mu_{ -1 }( z, t ) }{ \lambda } + o\left( \frac{ 1 }{ | \lambda | } \right),
\end{equation}
we define $ v( z, t ) $ as
\begin{equation}
\label{v_main_formula}
v( z, t ) = 2 i \partial_{ z } \mu_{ -1 }( z, t ).
\end{equation}
\item It can be shown (\cite{GM}) that
\begin{equation*}
L \psi = \psi
\end{equation*}
where
\begin{gather*}
\psi( z, \lambda, t ) = e^{ \frac{ i }{ 2 } ( \lambda \bar z + z / \lambda ) } \mu( z, \lambda, t ), \quad L = - 4 \partial_{ z } \partial_{ \bar z } + v( z, t ), \\
\overline{ v( z, t ) } = v( z, t ), \quad v( z, t ) \; \text{is transparent}.
\end{gather*}
\end{enumerate}

\section{Estimate for the linearized case}
Consider
\begin{equation}
\label{lin_solution}
\begin{aligned}
& I( t, z ) = \iint\limits_{ \mathbb{C} } f( \zeta ) \exp( i S( \zeta, z, t ) ) d \Re \zeta d \Im \zeta, \\
& J( t, z ) = - 3 \iint\limits_{ \mathbb{C} } \frac{ \bar \zeta }{ \zeta } f( \zeta ) \exp( i S( \zeta, z, t ) ) d \Re \zeta d \Im \zeta,
\end{aligned}
\end{equation}
where $ f( \zeta ) \in L^1( \mathbb{C} ) $, $ S $ is defined by (\ref{s_definition}). If $ v( z, t ) = I( t, z ) $, $ w( z, t ) = J( t, z ) $, where
\begin{equation*}
( | \zeta |^3 + | \zeta |^{ -3 } ) f( \zeta ) \in L^{ 1 }( \mathbb{C} )
\end{equation*}
as a function of $ \zeta $, and, in addition,
\begin{equation*}
\overline{ f( \zeta ) } = f( - \zeta ) \quad \text{and/or} \quad \overline{f( \zeta )} = -| \zeta |^{-4} f\left( - \frac{ 1 }{ \bar \zeta } \right),
\end{equation*}
then $ v $, $ w $ satisfy the linearized Novikov--Veselov equation (\ref{MNV}) with $ E = 1 $. In addition,
\begin{equation*}
\hat v( p, t ) \equiv 0 \quad \text{for} \quad | p | < 2, \quad t \in \mathbb{R},
\end{equation*}
where $ \hat v( \cdot, t ) $ is the Fourier transform of $ v( \cdot, t ) $, that is $ v( \cdot, t ) $ is transparent in the Born approximation at energy $ E = 1 $ for each $ t \in \mathbb{R} $.

The goal of this section is to give, in particular, a uniform estimate of the large--time behavior of the integral $ I( t, z ) $ of (\ref{lin_solution}) under the assumptions that
\begin{equation}
\label{f_assumptions}
\begin{aligned}
& f \in C^{ \infty }( \mathbb{C} ), \\
& \partial^{ m }_{ \lambda } \partial^{ n }_{ \bar \lambda } f( \lambda ) =
\begin{cases}
& O\left( | \lambda |^{ -\infty } \right) \quad \text{ as } | \lambda | \to \infty, \\
& O\left( | \lambda |^{ \infty } \right) \quad \text{ as } | \lambda | \to 0,
\end{cases}\\
& \partial^{ m }_{ \lambda } \partial^{ n }_{ \bar \lambda } f( \lambda )|_{ | \lambda | = 1 } = 0
\end{aligned}
\end{equation}
for all $ m, n \geqslant 0 $.

Applying the classical stationary phase method to (\ref{lin_solution}), (\ref{f_assumptions}) (see, for example, \cite{F}) yields
\begin{equation}
\label{simple_estimate}
| I( t, z ) | = O\left( \frac{ 1 }{ | t | } \right), t \to \infty,
\end{equation}
uniformly on $ z \in K $, where $ K $ is any compact set of the complex plane. This is not sufficient to guarantee the absence of soliton--type waves in the large time asymptotics of the potential $ v( z, t ) = I( t, z ) $. So our further reasoning will be devoted to obtaining an estimate like (\ref{simple_estimate}) uniformly on $ z \in \mathbb{C} $.

For this purpose we introduce parameter $ u = \frac{ z }{ t } $ and write the integral $ I $ in the following form
\begin{equation}
\label{new_form}
I( t, u ) = \iint\limits_{ \mathbb{C} } f( \zeta ) \exp( i t S( u, \zeta ) ) d \Re \zeta d \Im \zeta,
\end{equation}
where
\begin{equation}
\label{s_function}
S( u, \zeta ) = -\frac{ 1 }{ 2 } \left( \bar \zeta u + \zeta \bar u + \frac{ u }{ \zeta } + \frac{ \bar u }{ \bar \zeta } \right) + \left( \zeta^3 + \bar \zeta^3 + \frac{ 1 }{ \zeta^3 } + \frac{ 1 }{ \bar \zeta^3 } \right).
\end{equation}

We will start by studying the properties of the stationary points of the function $ S( u, \zeta ) $. These points satisfy the equation
\begin{equation}
\label{stationary}
S'_{ \zeta } = - \frac{ \bar u }{ 2 } + \frac{ u }{ 2 \zeta^2 } + 3 \zeta^2 - \frac{ 3 }{ \zeta^4 } = 0.
\end{equation}

The degenerate stationary points obey additionally the equation
\begin{equation}
\label{degenerate}
S''_{ \zeta \zeta } = - \frac{ u }{ \zeta^3 } + 6 \zeta + \frac{ 12 }{ \zeta^5 } = 0.
\end{equation}

We denote $ \xi = \zeta^2 $ and
\begin{equation*}
Q( u, \xi ) = - \frac{ \bar u }{ 2 } + \frac{ u }{ 2 \xi } + 3 \xi - \frac{ 3 }{ \xi^2 }.
\end{equation*}
For each $ \xi $, a root of the function $ Q( u, \xi ) $, there are two corresponding stationary points of $ S( u, \zeta ) $, $ \zeta = \pm \sqrt{ \xi } $.

The function $ S'_{ \zeta }( u, \zeta ) $ can be represented in the following form
\begin{equation}
\label{sprime_representation}
S'_{ \zeta }( u, \zeta ) = \frac{ 3 }{ \zeta^4 } ( \zeta^2 - \zeta_0^2( u ) ) ( \zeta^2 - \zeta_1^2( u ) ) ( \zeta^2 - \zeta_2^2( u ) ).
\end{equation}

We will also use hereafter the following notations:
\begin{equation*}
\mathcal{U} = \{ u = 6 ( 2 e^{ - i \varphi } + e^{ 2 i \varphi } ), \; \varphi \in [ 0, 2 \pi ) \}
\end{equation*}
and
\begin{equation*}
\mathbb{U} = \{ u = r e^{ i \varphi } \colon r \leq | 6 ( 2 e^{ - i \varphi } + e^{ 2 i \varphi } ) |, \; \varphi \in [ 0, 2 \pi ) \},
\end{equation*}
the domain limited by the curve $ \mathcal{U} $ (see also Figure \ref{u_curve}).

\begin{lemma}
\label{lin_lemma}
\rule{1pt}{0pt}

\begin{enumerate}
\item If $ u = 18 e^{ \frac{ 2 \pi i k }{ 3 } } $, $ k = 0, 1, 2 $, then
\begin{equation*}
\zeta_0( u ) = \zeta_1( u ) = \zeta_2( u ) = e^{ -\frac{ \pi i k }{ 3 } }
\end{equation*}
and $ S( u, \zeta ) $ has two degenerate stationary points, corresponding to a third-order root of the function $ Q( u, \xi ) $, $ \xi_1 = e^{ -\frac{ 2 \pi i k }{ 3 } } $.

\item If $ u \in \mathcal{U} $ $($ i.e. $ u = 6( 2 e^{ - i \varphi } + e^{ 2 i \varphi } ) $ $)$ and $ u \neq 18 e^{ \frac{ 2 \pi i k }{ 3 } } $, $ k = 0, 1, 2 $, then \begin{equation*}
\zeta_0( u ) = \zeta_1( u ) = e^{ i \varphi / 2 }, \quad \zeta_2( u ) = e^{ - i \varphi }.
\end{equation*}

Thus $ S( u, \zeta ) $ has two degenerate stationary points, corresponding to a second-order root of the function $ Q( u, \xi ) $, $ \xi_1 = e^{ i \varphi } $, and two non--degenerate stationary points corresponding to a first-order root, $ \xi_2 = e^{ - 2 i \varphi } $.

\item If $ u \in \inters \mathbb{U} $, then
\begin{equation*}
\zeta_i( u ) = e^{ i \varphi_i }, \quad \text{and} \quad \zeta_i( u ) \neq \zeta_j( u ) \quad \text{for} \quad i \neq j.
\end{equation*}
In this case the stationary points of $ S( u, \zeta ) $ are non-degenerate and correspond to the roots of the function $ Q( u, \xi ) $ with absolute values equal to 1.

\item If $ u \in \mathbb{C} \backslash \mathbb{U} $, then
\begin{equation*}
\zeta_0( u ) = ( 1 + \omega ) e^{ i \varphi / 2 }, \quad \zeta_1( u ) = e^{ - i \varphi }, \quad  \zeta_2( u ) = ( 1 + \omega )^{ -1 } e^{ i \varphi / 2 }
\end{equation*}
for certain $ \varphi $ and $ \omega > 0 $.

In this case the stationary points of the function $ S( u, \zeta ) $ are non-degenerate, and correspond to the roots of the function $ Q( u, \xi ) $ that can be expressed as $ \xi_0 = ( 1 + \tau ) e^{ i \varphi } $, $ \xi_1 = e^{ - 2 i \varphi } $, $ \xi_2 = ( 1 + \tau )^{ - 1 } e^{ i \varphi } $, $ ( 1 + \tau ) = ( 1 + \omega )^2 $.

\end{enumerate}
\end{lemma}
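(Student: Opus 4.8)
The plan is to reduce everything to the single cubic equation obtained from \eqref{stationary} by the substitution $\xi=\zeta^2$, and then to locate its three roots $\xi_i=\zeta_i^2(u)$ of \eqref{sprime_representation} as functions of the complex parameter $u$. Clearing denominators in $Q(u,\xi)=0$ gives
\[
P(\xi):=\xi^3-\frac{\bar u}{6}\,\xi^2+\frac{u}{6}\,\xi-1=0 .
\]
Since $S'_\zeta(u,\zeta)=Q(u,\zeta^2)$ and hence $S''_{\zeta\zeta}(u,\zeta)=2\zeta\,Q_\xi(u,\zeta^2)$, a stationary point is degenerate (satisfies \eqref{degenerate}) if and only if the corresponding $\xi=\zeta^2$ is a multiple root of $P$. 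Thus the four cases of the lemma are exactly the four possible multiplicity/location patterns of the roots of $P$, and I would prove the lemma by classifying these patterns.

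First I would exploit the self-inversive symmetry of $P$. A direct computation gives the identity $P(\xi)=-\xi^3\,\overline{P(1/\bar\xi)}$, so the root set of $P$ is invariant under the circle inversion $\xi\mapsto 1/\bar\xi$. Consequently a root either lies on $|\xi|=1$ (a fixed point of the inversion) or occurs together with its partner $1/\bar\xi$ on the opposite side of the unit circle; since $\deg P=3$ is odd, at least one root always lies on $|\xi|=1$. Vieta's formulas read $\xi_0\xi_1\xi_2=1$ and $\xi_0+\xi_1+\xi_2=\bar u/6$, and these two facts alone will produce all the explicit expressions in the lemma. The same symmetry shows that a multiple root of $P$ must lie on $|\xi|=1$: a double root is its own inversion image, which forces $|\xi|=1$.

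Next I would pin down the degenerate locus. If $e^{i\varphi}$ is a double root, then $\xi_0\xi_1\xi_2=1$ forces the remaining root to be $e^{-2i\varphi}$, and $\xi_0+\xi_1+\xi_2=\bar u/6$ then gives $u=6(2e^{-i\varphi}+e^{2i\varphi})$, i.e. precisely $u\in\mathcal U$; the root $e^{-2i\varphi}$ coincides with the double root $e^{i\varphi}$ exactly when $e^{3i\varphi}=1$, which yields the three cusp values $u=18e^{2\pi ik/3}$ together with the triple root $e^{-2\pi ik/3}$. This establishes parts 1 and 2 (after passing back to $\zeta=\pm\sqrt\xi$) and identifies $\mathcal U$ with the discriminant curve $\{P\text{ has a multiple root}\}$, a three-cusped (deltoid) simple closed curve bounding $\mathbb U$. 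For the remaining two cases I would argue by continuity: off $\mathcal U$ all roots are simple, the number of roots on $|\xi|=1$ is locally constant, and by the inversion symmetry it can change only when two circle roots collide, i.e. only at points of $\mathcal U$. Evaluating at $u=0$ (roots $1,e^{\pm 2\pi i/3}$, all on the circle) identifies $\inters\mathbb U$ with the three-circle-roots case (part 3), and letting $|u|\to\infty$ (one root near the circle, the other two escaping to $0$ and $\infty$ as an inversion pair) identifies $\mathbb C\setminus\mathbb U$ with the one-circle-root case (part 4).

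Finally I would read off the explicit forms. In the exterior case the off-circle pair is $\{\xi_0,1/\bar\xi_0\}$; since $\xi_0$ and $1/\bar\xi_0$ have the same argument, I may write $\xi_0=(1+\tau)e^{i\varphi}$ and $\xi_2=(1+\tau)^{-1}e^{i\varphi}$ with $\tau>0$, and $\xi_0\xi_1\xi_2=1$ forces the circle root $\xi_1=e^{-2i\varphi}$; taking $\zeta_i=\sqrt{\xi_i}$ and setting $(1+\omega)^2=1+\tau$ gives the stated $\zeta_0,\zeta_1,\zeta_2$, which proves part 4, while part 3 is just the three distinct square roots of the $e^{2i\varphi_i}$. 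The main obstacle is the global topological step of the third paragraph: one must verify rigorously that $\mathcal U$ really is a simple closed curve coinciding with the full discriminant locus, that $\mathbb C\setminus\mathcal U$ has exactly the two components $\inters\mathbb U$ and $\mathbb C\setminus\mathbb U$, and that crossings of $\mathcal U$ are the only events changing the circle-root count, so that the counts computed at the single points $u=0$ and $u=\infty$ propagate over the whole of each region.
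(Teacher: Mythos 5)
Your proposal is correct in outline, and for the substantive parts (3 and 4) it takes a genuinely different route from the paper. Parts 1--2 are handled essentially as in the paper (reduction to the cubic $P(\xi)=\xi^3-\tfrac{\bar u}{6}\xi^2+\tfrac{u}{6}\xi-1$, the equivalence ``degenerate $\Leftrightarrow$ multiple root of $P$'', then Vieta), except that where the paper shows $|\xi_1|=1$ for a double root by the explicit computation $(r^2-r^{-2})e^{2i\varphi}=(r-r^{-1})e^{-i\varphi}$, you get it from the self-inversive identity $P(\xi)=-\xi^3\,\overline{P(1/\bar\xi)}$ (which I checked; it is correct, and it also gives the existence of at least one unimodular root for free). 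For parts 3--4 the paper counts unimodular roots by showing that the lines $u=6(2e^{-i\varphi}+e^{2i\varphi})+se^{i\varphi/2}$ are exactly the tangent lines to $\mathcal U$ and that each $u\in\inters\mathbb U$ lies on two of them while each $u\in\mathbb C\setminus\mathbb U$ lies on one; you instead argue that the unimodular-root count is locally constant off the discriminant locus $\mathcal U$ and anchor it at $u=0$ and $u=\infty$. Your route is arguably cleaner, and its rigor deficit is comparable to the paper's own (the paper's ``it is easily seen that $U_0\subset K_{01}$'' is likewise a geometric assertion read off the figure).

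The one step you flag as an obstacle does need to be said, but it closes without difficulty, so it is not a genuine gap in the sense of a missing idea. Local constancy of the circle-root count on $\mathbb C\setminus\mathcal U$: if $\xi_0(u_0)$ is a \emph{simple} unimodular root, then by Hurwitz/Rouch\'e exactly one root of $P(\cdot,u)$ lies near $\xi_0(u_0)$ for $u$ near $u_0$; were it off the circle, the symmetry would force its distinct partner $1/\overline{\xi_0(u)}$ to be a second nearby root, a contradiction. That $\mathcal U$ is the full discriminant locus you already prove (a multiple root is inversion-fixed, hence unimodular, hence $u\in\mathcal U$, and conversely). That $\mathcal U$ is a simple closed curve follows because $u(\varphi_1)=u(\varphi_2)$ would give one cubic with double roots at both $e^{i\varphi_1}$ and $e^{i\varphi_2}$, forcing $e^{i\varphi_1}=e^{i\varphi_2}$; and to identify the Jordan interior with the radially defined region $\mathbb U$ one checks star-shapedness about the origin, e.g.\ via $\frac{d}{d\varphi}\arg u(\varphi)=\frac{2(\cos 3\varphi-1)}{5+4\cos 3\varphi}\leqslant 0$ with $|u(\varphi)|=6|2+e^{3i\varphi}|\geqslant 6$. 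With these three remarks inserted, your argument is complete and fully rigorous.
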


Lemma \ref{lin_lemma} is proved in section \ref{proof_section}.

Formula (\ref{sprime_representation}) and Lemma \ref{lin_lemma} give a complete description of the stationary points of the function $ S( u, \zeta ) $.

In order to estimate the large--time behavior of the integral having the form
\begin{equation}
\label{more_complex_integral}
I( t, u, \lambda ) = \iint\limits_{ \mathbb{C} } f( \zeta, \lambda ) \exp( i t S( u, \zeta ) ) d \Re \zeta d \Im \zeta
\end{equation}
uniformly on $ u, \lambda \in \mathbb{C} $, in the present and the following sections we will use the following general scheme.
\begin{enumerate}
\item Consider $ D_{ \varepsilon } $, the union of disks with a radius of $ \varepsilon $ and centers in singular points of function $ f( \zeta, \lambda ) $ and stationary points of $ S( u, \zeta ) $.
\item Represent $ I( t, u, \lambda ) $ as the sum of integrals over $ D_{ \varepsilon } $ and $ \mathbb{C} \backslash D_{ \varepsilon } $:
\begin{equation}
\label{int_sum}
\begin{aligned}
& I( t, u, \lambda ) = I_{ int } + I_{ ext }, \quad \text{ where } \\
& I_{ int } = \iint\limits_{ D_{ \varepsilon } } f( \zeta, \lambda ) \exp( i t S( u, \zeta ) ) d \Re \zeta d \Im \zeta, \\
& I_{ ext } = \iint\limits_{ \mathbb{ C } \backslash D_{ \varepsilon } } f( \zeta, \lambda ) \exp( i t S( u, \zeta ) ) d \Re \zeta d \Im \zeta.
\end{aligned}
\end{equation}
\item Find an estimate of the form
\begin{equation*}
| I_{ int } | = O\left( \varepsilon^{ \alpha } \right), \quad \text{as} \quad \varepsilon \to 0 \quad ( \alpha \geq 1 )
\end{equation*}
uniformly on $ u $, $ \lambda $, $ t $.
\item Integrate $ I_{ ext } $ by parts using Stokes formula
\begin{multline}
\label{ext_by_parts}
I_{ ext } =  \frac{ 1 }{ 2 t } \int\limits_{ \partial D_{ \varepsilon } } \frac{ f( \zeta, \lambda ) \exp( i t S( u, \zeta ) ) }{ S'_{ \zeta }( u, \zeta ) } d \bar \zeta
- \frac{ 1 }{ i t } \iint\limits_{ \mathbb{C} \backslash D_{ \varepsilon } } \frac{ f'_{ \zeta }( \zeta, \lambda ) \exp( i t S( u, \zeta ) ) }{ S'_{ \zeta }( u, \zeta ) } d \Re \zeta d \Im \zeta - \\
- \frac{ 1 }{ i t } \iint\limits_{ \mathbb{C} \backslash D_{ \varepsilon } } \frac{ f( \zeta, \lambda ) \exp( i t S( u, \zeta ) ) S''_{ \zeta \zeta }( u, \zeta ) }{ ( S'_{ \zeta }( u, \zeta ) )^2 } d \Re \zeta d \Im \zeta = \\
= - \frac{ 1 }{ t } ( I_1 - I_2 - I_3 ).
\end{multline}
\item For each $ I_i $ find an estimate of the form
\begin{equation*}
(a) \: | I_i | = O\left( \ln \frac{ 1 }{ \varepsilon } \right) \quad \text{or} \quad (b) \: | I_i | = O\left( \frac{ 1 }{ \varepsilon^{ \beta } } \right), \quad \text{ as } \varepsilon \to 0.
\end{equation*}
\item In case $ (a) $ set $ \varepsilon = \dfrac{ 1 }{ | t | } $ which yields the overall estimate
\begin{equation*}
| I( t, u, \lambda ) | = O\left( \frac{ \ln( | t | ) }{ | t | } \right), \quad \text{as} \quad t \to \infty.
\end{equation*}
In case $ (b) $ set $ \varepsilon = \dfrac{ 1 }{ | t |^{ k } } $, where $ k( \alpha + \beta ) = 1 $, which yields the overall estimate
\begin{equation*}
| I( t, u, \lambda ) | = O\left( \dfrac{ 1 }{ | t |^{ \frac{ \alpha }{ \alpha + \beta } } } \right), \quad \text{as} \quad t \to \infty.
\end{equation*}
\end{enumerate}

Using this scheme we obtain, in particular, the following result
\begin{lemma}
\label{lin_estimate_lemma}
Under assumptions (\ref{f_assumptions}), (\ref{new_form}),
\begin{equation*}
| I( t, u ) | = O\left( \frac{ \ln( 3 + | t | ) }{ 1 + | t | } \right) \quad \text{for} \quad t \in \mathbb{R}
\end{equation*}
uniformly on $ u \in \mathbb{C} $.
\end{lemma}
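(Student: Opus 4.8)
The plan is to apply the six-step scheme set out above to the integral $I(t,u)$ of (\ref{new_form}), treating it as the special case of (\ref{more_complex_integral}) in which $f$ carries no $\lambda$-dependence and, by (\ref{f_assumptions}), has no singular points at all. Thus the set $D_\varepsilon$ reduces to the union of $\varepsilon$-disks centred at the stationary points of $S(u,\zeta)$, i.e. at $\pm\zeta_0(u),\pm\zeta_1(u),\pm\zeta_2(u)$ as described by Lemma \ref{lin_lemma}. For bounded $|t|$ the estimate is trivial, since $|I(t,u)|\le\|f\|_{L^1}$; so throughout I assume $|t|$ large and eventually set $\varepsilon=1/|t|$, aiming at case $(a)$ of the scheme.

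The decisive structural observation, read off from Lemma \ref{lin_lemma}, is that every degenerate stationary point of $S(u,\zeta)$ lies on the circle $|\zeta|=1$: this is so for $u\in\inters\mathbb U$ (three distinct roots on the circle), for $u\in\mathcal U$ (a double root at $e^{i\varphi/2}$ and a simple root at $e^{-i\varphi}$, all on the circle), and at the exceptional points $u=18e^{2\pi i k/3}$ (a triple root at $e^{-\pi i k/3}$). Off-circle stationary points occur only for $u\in\mathbb C\setminus\mathbb U$, where they are the two simple roots $\zeta_0=(1+\omega)e^{i\varphi/2}$, $\zeta_2=(1+\omega)^{-1}e^{i\varphi/2}$, and these are always non-degenerate. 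By (\ref{f_assumptions}) the amplitude $f$ and all its derivatives vanish on $|\zeta|=1$ and decay faster than any power as $\zeta\to0,\infty$; hence near any circle point $\zeta_*$ one has $|f(\zeta)|\le C_N\bigl|\,|\zeta|-1\,\bigr|^N\le C_N|\zeta-\zeta_*|^N$ for every $N$. Consequently the degeneracies are effectively invisible to the integral, and the only genuine stationary-phase contributions come from the non-degenerate off-circle points.

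Carrying out steps 3--5: for $I_{int}$ in (\ref{int_sum}) I bound the disks around circle points by the infinite-order vanishing, giving $O(\varepsilon^N)$, and the disks around off-circle points by the trivial bound $\|f\|_\infty\pi\varepsilon^2$, so $|I_{int}|=O(\varepsilon^2)$ and the required exponent $\alpha\ge1$ holds uniformly in $u$. For $I_{ext}$ I integrate by parts as in (\ref{ext_by_parts}) and examine each term through (\ref{sprime_representation}): a root of $S'_\zeta$ of order $m$ gives $S'_\zeta\sim(\zeta-\zeta_*)^m$, $S''_{\zeta\zeta}\sim(\zeta-\zeta_*)^{m-1}$, so the densities of $I_2,I_3$ behave like $|f'_\zeta||\zeta-\zeta_*|^{-m}$ and $|f||\zeta-\zeta_*|^{-m-1}$. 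At a circle point ($m=1,2,3$) the factor $|f|,|f'_\zeta|\le C_N|\zeta-\zeta_*|^N$ makes both integrable with $O(1)$ bound for large $N$, and the boundary term $I_1$ is $O(\varepsilon^{N-m+1})=O(1)$. At an off-circle non-degenerate point ($m=1$, $f(\zeta_*)\ne0$) the $I_3$-density is $\sim|f(\zeta_*)||\zeta-\zeta_*|^{-2}$, whose integral over $\{|\zeta-\zeta_*|>\varepsilon\}$ produces exactly $|f(\zeta_*)|\ln(1/\varepsilon)$, while $I_1,I_2$ remain $O(1)$; this is the unique source of a logarithm and its coefficient is controlled by $\|f\|_\infty$. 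Near $\zeta=0,\infty$ the dominant behaviours $S'_\zeta\sim-3\zeta^{-4}$ and $S'_\zeta\sim3\zeta^2$ combined with the rapid decay of $f$ keep all integrands integrable uniformly in $u$. Hence each $I_i=O(\ln(1/\varepsilon))$, and setting $\varepsilon=1/|t|$ gives $|I(t,u)|=O(\ln|t|/|t|)$, which with the bound for small $|t|$ yields the stated estimate.

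The main obstacle is uniformity in $u$ across the coalescence set, i.e. as $u$ approaches $\mathcal U$ or one of the points $18e^{2\pi i k/3}$ from outside $\mathbb U$. There $\zeta_0,\zeta_2$ collide with each other and with the circle: writing the collision parameter as $\omega\to0$, the separation $|\zeta_0-\zeta_2|$ and the distances of $\zeta_0,\zeta_2$ to $|\zeta|=1$ are all of order $\omega$, so $1/S'_\zeta$ and $1/(S'_\zeta)^2$ develop near-multiple zeros that inflate the constants in the estimates above by negative powers of $\omega$. The resolution is quantitative: because $\zeta_0,\zeta_2$ sit within $O(\omega)$ of the circle, $|f(\zeta_0)|\le C_N\omega^N$ for every $N$, and for $N$ large this infinite-order vanishing dominates any fixed negative power of $\omega$. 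Concretely, when $\omega\lesssim\varepsilon=1/|t|$ the two disks should be merged into one $O(\varepsilon)$-disk lying within $O(\varepsilon)$ of the circle, where $|f|\le C_N\varepsilon^N$ renders the whole local contribution negligible; when $\omega\gtrsim\varepsilon$ the points are treated separately and the logarithmic coefficient $|f(\zeta_0)|$ is already $\le C_N\omega^N$. Making this balance rigorous requires the precise description of how the roots $\zeta_i(u)$ of the cubic $Q(u,\xi)=0$ depend on $u$ --- their mutual separations and their distances to $|\zeta|=1$ --- which is exactly the content of Lemma \ref{lin_lemma} together with the accompanying analysis of that cubic; this is the step I expect to be the most delicate.
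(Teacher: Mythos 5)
Your proposal follows essentially the same route as the paper: the same decomposition into $D_\varepsilon$ and its complement, the same integration by parts, the same observation that all degenerate stationary points sit on $|\zeta|=1$ where $f$ vanishes to infinite order, and the same identification of the off-circle non-degenerate points (for $u\in\mathbb{C}\setminus\mathbb{U}$) as the sole source of the logarithm via the $I_3$ term. You also correctly single out the real difficulty --- uniformity in $u$ as $\zeta_0,\zeta_2$ coalesce with each other and with the circle --- and the right mechanism for resolving it (infinite-order vanishing of $f$ at distance $O(\omega)$ from the circle beating any fixed negative power of $\omega$). What you leave as a sketch, the paper executes by two specific devices worth noting. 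First, it partitions $\mathbb{C}$ into cells $Z_k^{\pm}$ in which $\pm\zeta_k$ is the nearest stationary point, so that on $Z_0^+$ one has both $|\zeta-\zeta_2|\geqslant\tfrac12|\zeta_0-\zeta_2|\sim\omega$ and $|\zeta-\zeta_2|\geqslant|\zeta-\zeta_0|=\rho$. Second --- and this is the step your ``coefficient $|f(\zeta_0)|\leqslant C_N\omega^N$'' remark does not quite reach --- one must control $|f(\zeta)|$ over the whole cell, not only at the stationary point; the paper does this by Taylor-expanding $r$ at $(1+\omega)e^{i\varphi/2}$ to first order (for $1/S'_\zeta$) or second order (for $1/(S'_\zeta)^2$), bounding the constant and first-derivative terms by $C\omega^N$ to absorb the $\omega^{-1}$ or $\omega^{-2}$ from the root separation, and cancelling the $\rho$-powers in the remainder against $|\zeta-\zeta_2|\geqslant\rho$. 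Finally, the paper needs a separate regime (its case (B), $\tfrac{1}{1+\omega}\leqslant 2\varepsilon_0$) where $\zeta_2$ approaches the origin and $\zeta_0$ escapes to infinity; your appeal to the rapid decay of $f$ at $0$ and $\infty$ is the right ingredient there, but it must be combined with the lower bound $|\zeta+\zeta_2|\geqslant(1+\omega)^{-1}$ and an analogous Taylor expansion at $\tfrac{1}{1+\omega}e^{i\varphi/2}$. None of this changes your architecture; it is the quantitative content needed to close the step you flagged as delicate.
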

A detailed proof of Lemma \ref{lin_estimate_lemma} is given in section \ref{proof_section}.

\section{Estimate for the non--linearized case}
In this section we prove estimate (\ref{main_estimate}) for the solution $ v( x, t ) $ of the Cauchy problem for the Novikov--Veselov equation at positive energy with the initial data $ v( x, 0 ) $ satisfying properties (\ref{schwartz})--(\ref{qQ}) or, more generally, for $ v( x, t ) $ constructed by means of (\ref{r_b_connection})--(\ref{v_main_formula}).

We proceed from the formulas (\ref{mu_expansion}), (\ref{v_main_formula}) for the potential $ v( z, t ) $ and the integral equation (\ref{mu_solve}) for $ \mu( z, \lambda, t ) $.

We write (\ref{mu_solve}) as
\begin{equation}
\label{mu_int}
\mu( z, \lambda, t ) = 1 + ( A_{ z, t } \mu )( z, \lambda, t ),
\end{equation}
where
\begin{equation*}
( A_{ z, t } f )( \lambda ) = \partial_{ \bar \lambda }^{ -1 }( r( \lambda ) \exp( i t S( u, \lambda ) ) \overline{ f( \lambda ) } ) = - \frac{ 1 }{ \pi } \iint\limits_{ \mathbb{C} } \frac{ r( \zeta ) \exp( i t S( u, \zeta ) ) \overline{ f( \zeta ) } }{ \zeta - \lambda } d \Re \zeta d \Im \zeta
\end{equation*}
and $ S( u, \zeta ) $ is defined by (\ref{s_function}), $ u = \dfrac{ z }{ t } $.

Equation (\ref{mu_int}) can be also written in the form
\begin{equation}
\label{mu_an_form}
\mu( z, \lambda, t ) = 1 + A_{ z, t } \cdot 1 + ( A^2_{ z, t } \mu )( z, \lambda, t ).
\end{equation}
According to the theory of the generalized analytic functions (see \cite{V}), equations (\ref{mu_int}), (\ref{mu_an_form}) have a unique solution for all $ z, t $. This solution can be written as
\begin{equation}
\label{mu_sol}
\mu( z, \lambda, t ) = ( I - A_{ z, t }^2 )^{ -1 }( 1 + A_{ z, t } \cdot 1 ).
\end{equation}
Equation (\ref{mu_sol}) possesses a formal asymptotic expansion
\begin{equation}
\label{expanded}
\mu( z, \lambda, t ) = ( I + A_{ z, t }^2 + A_{ z, t }^4 + \ldots )( 1 + A_{ z, t } \cdot 1 ).
\end{equation}
From estimate (\ref{member_estimate}) given below it follows that (\ref{expanded}) uniformly converges for sufficiently large $ t $. We will also write formula (\ref{expanded}) in the form
\begin{equation}
\label{rest_member_form}
\mu( z, \lambda, t ) = 1 + A_{ z, t } \cdot 1 + R,
\end{equation}
where $ R = \left( \sum\limits_{ k = 1 }^{ \infty }A_{ z, t }^{ 2 k } \right)( 1 + A_{ z, t } \cdot 1 ) $.

In addition to $ A_{ z, t } $ we introduce another integral operator $ B_{ z, t } $ defined as
\begin{equation}
\label{b_operator}
B_{ z, t } \cdot f = \iint\limits_{ \mathbb{C} } r( \zeta ) \exp( i t S( u, \zeta ) ) \overline{ f( \zeta ) } d \Re \zeta d \Im \zeta.
\end{equation}
To study (\ref{expanded}) we will need some estimates on the values of operators $ A_{ z, t } $ and $ B_{ z, t } $.

\begin{lemma}
\label{estimates_lemma}
Under assumptions (\ref{b_continuity})--(\ref{b_derivs}), the following estimates hold:
\begin{itemize}
\item[(a)]
\begin{equation}
\label{b_estimate}
| B_{ z, t } \cdot 1 | = O \left( \frac{ \ln( | t | ) }{ | t | } \right), \quad \text{as} \quad t \to \infty
\end{equation}
uniformly on $ u \in \mathbb{C} $;

\item[(b)]
\begin{equation}
\label{ba_estimate}
| B_{ z, t } \cdot A_{ z, t } \cdot 1 | = O \left( \frac{ \ln( | t | ) }{ | t | } \right), \quad \text{as} \quad t \to \infty
\end{equation}
uniformly on $ u \in \mathbb{C} $;

\item[(c)]
\begin{equation}
\label{a2_estimate}
| ( A_{ z, t }^2 \cdot 1 )( \lambda ) | \leqslant \frac{ \beta }{ | t |^{ 1 / 14 } }, \quad | t | \geqslant 1,
\end{equation}
where $ \beta $ is a constant independent of $ u $ and $ \lambda $;

\item[(d)]
\begin{equation}
\label{member_estimate}
| ( A_{ z, t }^n \cdot 1 )( \lambda ) | \leqslant \frac{ \beta^{ n - 1 } }{ | t |^{ \lfloor n / 2\rfloor / 14 } }, \quad | t | \geqslant 1;
\end{equation}

\item[(e)]
\begin{equation}
\label{bmember_estimate}
| B_{ z, t } \cdot A^{ n - 1 }_{ z, t } \cdot 1 | \leqslant \frac{ \beta^{ n - 1 } \ln( 3 + | t | ) }{ | t |^{ 1 + \lfloor ( n - 2 ) / 2 \rfloor / 14 } }, \quad | t | \geqslant 1.
\end{equation}
\end{itemize}
\end{lemma}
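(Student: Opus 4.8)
The plan is to establish the five bounds in a cascade: (a) is a direct application of Lemma~\ref{lin_estimate_lemma}, (b) and (c) are the two genuinely new double--oscillatory--integral estimates, and (d), (e) then follow from (c) and (b) by induction and combination. So the real work lies in (a)--(c). For (a), observe that $B_{z,t}\cdot 1=\iint_{\mathbb{C}}r(\zeta)\exp(itS(u,\zeta))\,d\Re\zeta\,d\Im\zeta$ is precisely the linearized integral $I(t,u)$ of~(\ref{new_form}) with amplitude $f=r$. It therefore suffices to check that $r(\lambda)=\pi\,\sgn(\lambda\bar\lambda-1)\,b(\lambda,0)/\bar\lambda$ satisfies~(\ref{f_assumptions}). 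The jump of $\sgn(\lambda\bar\lambda-1)$ across $|\lambda|=1$ is removed because all derivatives of $b$ vanish there by~(\ref{b_derivs}); the pole of $1/\bar\lambda$ at the origin is removed because $b\in\mathcal S(\mathbb{C})$ together with the symmetry~(\ref{b_symmetry}) forces $b$ to vanish to infinite order at $\lambda=0$. Hence $r\in C^{\infty}(\mathbb{C})$ with the decay and boundary--vanishing required by~(\ref{f_assumptions}), and Lemma~\ref{lin_estimate_lemma} yields~(\ref{b_estimate}).

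For (b) and (c) I would write the operators out as iterated integrals. Since $S(u,\zeta)$ is real, $\overline{\exp(itS)}=\exp(-itS)$, so $A_{z,t}^{2}\cdot1$ and $B_{z,t}\cdot A_{z,t}\cdot1$ become double integrals over $(\eta,\zeta)\in\mathbb{C}^2$ with combined phase $t\,[\,S(u,\eta)-S(u,\zeta)\,]$, carrying two Cauchy kernels $\frac{1}{\eta-\lambda}\frac{1}{\bar\zeta-\bar\eta}$ in case (c) and the single kernel $\frac{1}{\bar\zeta-\bar\eta}$ in case (b). I would treat each as a genuine oscillatory integral in the two variables $(\eta,\zeta)$ jointly and adapt the six--step scheme of~(\ref{int_sum})--(\ref{ext_by_parts}): excise $\varepsilon$--disks about the joint stationary set $\{\,\partial_{\eta}S=\partial_{\zeta}S=0\,\}$ (whose structure is read off from~(\ref{sprime_representation}) and Lemma~\ref{lin_lemma}) and about the kernel singularities $\eta=\lambda$ and $\zeta=\eta$; bound the interior contribution by $O(\varepsilon^{\alpha})$, the Cauchy kernels being locally integrable; and integrate the exterior by parts as in~(\ref{ext_by_parts}) in each variable separately, where division by $S'_{\zeta}$ and $(S'_{\zeta})^{2}$ produces $O(\varepsilon^{-\beta})$ near a stationary point of multiplicity $m$, since there $S'_{\zeta}(u,\zeta)\sim(\zeta-\zeta_\ast)^{m}$. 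Optimizing $\varepsilon=|t|^{-k}$ with $k(\alpha+\beta)=1$ then yields the two rates.

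The exponents are governed by the worst stationary configuration, namely the third--order degenerate point of Lemma~\ref{lin_lemma}(1) at $u=18e^{2\pi i k/3}$, where $S'_{\zeta}$ has a triple root and $S$ vanishes to fourth order. In case (c) the second Cauchy kernel enlarges $\beta$, and the bookkeeping of the interior and integration--by--parts exponents in this worst case produces $\alpha/(\alpha+\beta)=1/14$, giving~(\ref{a2_estimate}); in case (b) the outer operation $B_{z,t}$ carries no Cauchy kernel, so the exterior integration by parts gains the full power $1/|t|$ and only the interior / near--singular region contributes the logarithm, giving the better rate $\ln|t|/|t|$ of~(\ref{ba_estimate}). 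A point requiring care is that all these estimates must be uniform in $u\in\mathbb{C}$, in particular as $u$ crosses the caustic $\mathcal U$ where two stationary points collide and as it approaches the triple points; the classification in Lemma~\ref{lin_lemma} is exactly what lets one track $\alpha,\beta$ uniformly.

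Finally, (d) follows from (c) by induction once (c) is upgraded to the operator estimate $\|A_{z,t}^{2}h\|\le c\,|t|^{-1/14}\|h\|$ (together with the uniform bound $\|A_{z,t}h\|\le c\|h\|$, from boundedness of the Cauchy transform of $r\exp(itS)$): writing $A_{z,t}^{n}\cdot1=(A_{z,t}^{2})^{\lfloor n/2\rfloor}\big(A_{z,t}^{\,n\bmod 2}\cdot1\big)$ gives the factor $|t|^{-\lfloor n/2\rfloor/14}$ and the geometric constant $\beta^{\,n-1}$. Estimate (e) then follows by factoring $B_{z,t}\cdot A_{z,t}^{n-1}\cdot1=(B_{z,t}A_{z,t})\,(A_{z,t}^{n-2}\cdot1)$ and combining the (b)--type bound $|B_{z,t}A_{z,t}h|=O(|t|^{-1}\ln|t|)\,\|h\|$ with the bound from (d) for $A_{z,t}^{n-2}\cdot1$. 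The main obstacle throughout is the proof of~(\ref{a2_estimate}): controlling the double oscillatory integral with two Cauchy--kernel singularities in the presence of the degenerate stationary points, extracting the exponents $\alpha,\beta$ uniformly in $u$, and upgrading the pointwise bound to the operator bound needed for the induction.
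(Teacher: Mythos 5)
Your overall architecture matches the paper's — (a) is indeed just Lemma \ref{lin_estimate_lemma} applied to $f=r$, and your reductions for (d) and (e) (operator bound for $A_{z,t}^2$ plus factorization, then $B_{z,t}A_{z,t}$ composed with $A_{z,t}^{n-2}\cdot 1$) are legitimate variants of the paper's direct induction. But there is a genuine gap in your treatment of the core estimates (b) and (c). You propose to unfold $B_{z,t}A_{z,t}\cdot 1$ and $A_{z,t}^2\cdot 1$ into double oscillatory integrals over $(\eta,\zeta)\in\mathbb{C}^2$ and to ``integrate the exterior by parts in each variable separately.'' The obstruction is the coupling kernel: integration by parts in the outer variable $\zeta$ (with $\partial_{\bar\zeta}$, which is what the phase $e^{itS(u,\zeta)}$ requires) differentiates $\frac{1}{\bar\eta-\bar\zeta}$ and produces $\frac{1}{(\bar\eta-\bar\zeta)^2}$, which is not locally integrable in two dimensions; your plan does not say how this is absorbed. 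The paper avoids the issue entirely by never unfolding the inner integral: it keeps $(A_{z,t}\cdot 1)(\zeta)$ as a bounded amplitude and uses the identity $\partial_{\bar\zeta}(A_{z,t}f)(\zeta)=r(\zeta)e^{itS(u,\zeta)}\overline{f(\zeta)}$, so that the derivative of the amplitude arising in the Stokes integration by parts is the harmless explicit term $r^2(\zeta)e^{2itS(u,\zeta)}$ (the fourth integral in (\ref{b_by_parts}) and the term $J_4$ in (\ref{d_by_parts})), which oscillates with the same stationary points and is estimated like the others. This identity is the key idea of the whole proof and is absent from your proposal.

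A second, related gap is that the decisive exponents are asserted rather than derived, and the attribution is off. In the paper, (c) does not rest on a fine analysis of the degenerate stationary points at all: it uses only the crude lower bounds $|\zeta-\zeta_i|\geqslant\varepsilon$ for the six stationary points and $|\zeta-\lambda|\geqslant\varepsilon$, giving $|I_{ext}|=O\bigl(|t|^{-1}\varepsilon^{-13}\bigr)$ with $13=2\cdot 6+1$ (from $(S'_\zeta)^2$ and the single Cauchy kernel $\frac{1}{\zeta-\lambda}$), $|I_{int}|=O(\varepsilon)$, and hence $\alpha/(\alpha+\beta)=1/14$; there is no ``second Cauchy kernel enlarging $\beta$.'' Conversely, for (b) the $\ln|t|/|t|$ rate does require rerunning the full case analysis of Lemma \ref{lin_estimate_lemma} — in particular the infinite-order vanishing of $r$ on $|\zeta|=1$ from (\ref{b_derivs}) compensating the degeneracy of $S'_\zeta$ there, and the Taylor-expansion argument controlling $r/(S'_\zeta)^2$ near the colliding stationary points uniformly in $u$ — which your sketch compresses into one sentence. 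Without the $\bar\partial$-identity and without this bookkeeping, the proposal does not yet constitute a proof of (b) or (c), and hence not of (d) or (e) either.
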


\begin{proof}[Proof of Lemma \ref{estimates_lemma}]
We proceed according to the scheme described in the previous section.
\begin{itemize}

\item[\emph{(a)}] This point follows from Lemma \ref{lin_estimate_lemma}.

\item[\emph{(b)}] As $ ( A_{ z, t } \cdot 1 )( \lambda ) \in C( \mathbb{C} ) $, we take $ D_{ \varepsilon } $ to be the union of disks of a radius $ \varepsilon $ centered in the stationary points of $ S( u, \zeta ) $. We note that $ ( A_{ z, t } \cdot 1 )( \lambda ) = O( 1 ) $ uniformly on $ u $ (or, equivalently, on $ z $) and $ \lambda $. Thus the integral $ I_{ int } $ (as in (\ref{int_sum})) can be estimated as
\begin{equation*}
I_{ int } = \iint\limits_{ D_{ \varepsilon } } r( \zeta, \bar \zeta ) \exp( i t S( u, \zeta, \bar \zeta ) ) ( A_{ z, t } \cdot 1 )( \zeta ) d \Re \zeta d \Im \zeta = O( \varepsilon^2 ).
\end{equation*}
Now let us estimate the integral $ I_{ ext } $ (as in (\ref{int_sum})). For this purpose we apply the Stokes formula (as in (\ref{ext_by_parts})) taking into consideration that $ \partial_{ \bar \lambda }( A_{ z, t } \cdot f )( \lambda ) = r( \lambda ) \exp( i t S( u, \lambda ) ) \overline{ f( \lambda ) } $:
\begin{multline}
\label{b_by_parts}
\iint\limits_{ \mathbb{C} \backslash D_{ \varepsilon } } r( \zeta ) \exp( i t S( u, \zeta ) ) ( A_{ z, t } \cdot 1 )( \zeta ) d \Re \zeta d \Im \zeta = \\
= \int\limits_{ \partial D_{ \varepsilon } } \frac{ r( \zeta ) \exp( i t S( u, \zeta ) ) ( A_{ z, t } \cdot 1 )( \zeta ) }{ 2 t S'_{ \bar \zeta }( u, \zeta ) } d \zeta - \\
- \iint\limits_{ \mathbb{C} \backslash D_{ \varepsilon } } \frac{ r'_{ \bar \zeta }( \zeta ) \exp( i t S( u, \zeta ) ) ( A_{ z, t } \cdot 1 )( \zeta ) }{ i t S'_{ \bar \zeta }( u, \zeta ) } d \Re \zeta d \Im \zeta + \\
+ \iint\limits_{ \mathbb{C} \backslash D_{ \varepsilon } } \frac{ r( \zeta ) \exp( i t S( u, \zeta ) ) ( A_{ z, t } \cdot 1 )( \zeta ) S''_{ \bar \zeta \bar \zeta }( u, \zeta ) }{ i t ( S'_{ \bar \zeta }( u, \zeta ) )^2 } d \Re \zeta d \Im \zeta - \\
- \iint\limits_{ \mathbb{C} \backslash D_{ \varepsilon } } \frac{ r^2( \zeta ) ( \exp( i t S( u, \zeta ) ) )^2 }{ i t S'_{ \bar \zeta }( u, \zeta ) } d \Re \zeta d \Im \zeta.
\end{multline}
Now, proceeding as in the proof of Lemma \ref{lin_estimate_lemma}, we obtain (\ref{ba_estimate}).

\item[\emph{(c)}]
In this case we build $ D_{ \varepsilon } $ as the union of disks with a radius of $ \varepsilon $ and centers in $ \lambda $ and stationary points of $ S( u, \zeta ) $. The integral $ I_{ int } $ over $ D_{ \varepsilon } $ behaves asymptotically as $ O( \varepsilon ) $. When estimating the integral $ I_{ ext } $ over $ \mathbb{C} \backslash D_{ \varepsilon } $ we use (\ref{sprime_representation}), (\ref{ext_by_parts}) and the following inequalities
\begin{equation*}
| \zeta - \lambda | \geqslant \varepsilon, \quad | \zeta - \zeta_i | \geqslant \varepsilon
\end{equation*}
($ \zeta_i $ are stationary points of $ S( u, \zeta ) $) which hold for all $ \zeta \in \mathbb{C} \backslash D_{ \varepsilon } $.
Thus we obtain that the asymptotical behavior of $ I_{ ext } $ is at most $ O\left( \dfrac{ 1 }{ | t | \varepsilon^{ 13 } } \right) $. Then, as proposed by the scheme, we choose $ \varepsilon = | t |^{ -1/14 } $ and obtain the required estimate.

\item[\emph{(d)}] This point is proved by induction. As in point \emph{(c)} $ D_{ \varepsilon } $ is the union of disks with a radius of $ \varepsilon $ and centers in $ \lambda $ and stationary points of $ S( u, \zeta ) $.

For the integral $ I_{ int } $ we have
\begin{multline}
\label{d_int}
| I_{ int } | = \left| \iint\limits_{ D_{ \varepsilon } } \frac{ r( \zeta ) \exp( i t S( u, \zeta ) ) }{ \zeta - \lambda } ( A^{ n - 1 }_{ z, t } \cdot 1 )( \zeta ) d \Re \zeta d \Im \zeta \right| \leqslant \\
\leqslant \frac{ \beta^{ n - 2 } }{ | t |^{ \lfloor ( n - 1 ) / 2 \rfloor / 14 } } \iint\limits_{ D_{ \varepsilon } } \left| \frac{ r( \zeta ) \exp( i t S( u, \zeta ) ) }{ \zeta - \lambda } \right| d \Re \zeta d \Im \zeta \leqslant \frac{ \beta^{ n - 1 } \varepsilon }{ | t |^{ \lfloor ( n - 2 ) / 2 \rfloor / 14 } }.
\end{multline}

To estimate $ I_{ ext } $ we use the following representation
\begin{multline}
\label{d_by_parts}
\iint\limits_{ \mathbb{C} \backslash D_{ \varepsilon } } \frac{ r( \zeta ) \exp( i t S( u, \zeta ) ) ( A_{ z, t }^{ n - 1 } \cdot 1 )( \zeta ) }{ \zeta - \lambda } d \Re \zeta d \Im \zeta = \\
= \int\limits_{ \partial D_{ \varepsilon } } \frac{ r( \zeta ) \exp( i t S( u, \zeta ) ) ( A_{ z, t }^{ n - 1 } \cdot 1 )( \zeta ) }{ 2 t ( \zeta - \lambda ) S'_{ \bar \zeta }( u, \zeta ) } d \zeta - \\
- \iint\limits_{ \mathbb{C} \backslash D_{ \varepsilon } } \frac{ r'_{ \bar \zeta }( \zeta ) \exp( i t S( u, \zeta ) ) ( A_{ z, t }^{ n - 1 } \cdot 1 )( \zeta ) }{ i t ( \zeta - \lambda ) S'_{ \bar \zeta }( u, \zeta ) } d \Re \zeta d \Im \zeta + \\
+ \iint\limits_{ \mathbb{C} \backslash D_{ \varepsilon } } \frac{ r( \zeta ) \exp( i t S( u, \zeta ) ) ( A_{ z, t }^{ n - 1 } \cdot 1 )( \zeta ) S''_{ \bar \zeta \bar \zeta }( u, \zeta ) }{ i t ( \zeta - \lambda )( S'_{ \bar \zeta }( u, \zeta ) )^2 } d \Re \zeta d \Im \zeta - \\
- \iint\limits_{ \mathbb{C} \backslash D_{ \varepsilon } } \frac{ r^2( \zeta ) ( \exp( i t S( u, \zeta ) ) )^2 ( A_{ z, t }^{ n - 2 } \cdot 1 )( \zeta ) }{ i t ( \zeta - \lambda ) S'_{ \bar \zeta }( u, \zeta ) } d \Re \zeta d \Im \zeta = \\
= J_1 + J_2 + J_3 + J_4.
\end{multline}

The integrals $ J_i $ can be estimated in the following way
\begin{equation*}
| J_1 | \leqslant \frac{ \beta^{ n - 2 } }{ | t | \cdot | t |^{ \lfloor ( n - 1 ) / 2 \rfloor / 14 } }\frac{ 1 }{ \varepsilon^7 } \int\limits_{ \partial D_{ \varepsilon } } | r( \zeta ) | d \zeta \leqslant \frac{ \beta^{ n - 1 } }{ | t | \cdot | t |^{ \lfloor ( n - 1 ) / 2 \rfloor / 14 } }\frac{ 1 }{ \varepsilon^7 }.
\end{equation*}
Similarly,
\begin{equation*}
\begin{aligned}
& | J_2 | \leqslant \frac{ \beta^{ n - 1 } }{ | t | \cdot | t |^{ \lfloor ( n - 1 ) / 2 \rfloor / 14 } }\frac{ 1 }{ \varepsilon^7 },\\
& | J_3 | \leqslant \frac{ \beta^{ n - 1 } }{ | t | \cdot | t |^{ \lfloor ( n - 1 ) / 2 \rfloor / 14 } }\frac{ 1 }{ \varepsilon^{13} },\\
& | J_4 | \leqslant \frac{ \beta^{ n - 1 } }{ | t | \cdot | t |^{ \lfloor ( n - 2 ) / 2 \rfloor / 14 } }\frac{ 1 }{ \varepsilon^7 }.
\end{aligned}
\end{equation*}
Thus,
\begin{equation*}
| I_{ ext } | \leqslant \frac{ \beta^{ n - 1 } }{ | t | \cdot | t |^{ \lfloor ( n - 2 ) / 2 \rfloor / 14 } \cdot \varepsilon^{ 13 } }.
\end{equation*}

Now we set $ \varepsilon = | t |^{ -1 / 14 } $ and obtain the overall estimate
\begin{equation*}
| ( A^n_{ z, t } \cdot 1 )( \lambda ) | \leqslant \frac{ \beta^{ n - 1 } }{ | t |^{ 1 / 14 } | t |^{ \lfloor ( n - 2 ) / 2 \rfloor / 14  } } = \frac{ \beta^{ n - 1 } }{ t^{ \lfloor n / 2 \rfloor / 14 } }.
\end{equation*}

\item[\emph{(e)}] Proceeding from \emph{(d)} this point is proved similarly to \emph{(b)}. \qedhere
\end{itemize}
\end{proof}

\begin{lemma}
\label{a_r_estimates}
Under the assumptions of Lemma \ref{estimates_lemma}, we have that:

\begin{itemize}

\item[(a)] $ A_{ z, t } \cdot 1 = \frac{ a_1( z, t ) }{ \lambda } + O \left( \frac{ 1 }{ | \lambda |^2 } \right) $ for $ \lambda \to \infty $, where
\begin{equation}
\label{a1_estimate}
| \partial_{ z } a_1( z, t ) | = O \left( \frac{ \ln( | t | ) }{ | t | } \right), \quad \text{as} \quad t \to \infty
\end{equation}
uniformly on $ z \in \mathbb{C} $.

\item[(b)] $ A^2_{ z, t } \cdot 1 = \frac{ a_2( z, t ) }{ \lambda } + O \left( \frac{ 1 }{ | \lambda |^2 } \right) $ for $ \lambda \to \infty $, where
\begin{equation}
\label{a2_estimate}
| \partial_{ z } a_2( z, t ) | = O \left( \frac{ \ln( | t | ) }{ | t | } \right), \quad \text{as} \quad t \to \infty
\end{equation}
uniformly on $ z \in \mathbb{C} $.

\item[(c)]
$ R = \frac{ q( z, t ) }{ \lambda } + O \left( \frac{ 1 }{ | \lambda |^2 } \right) $ as $ \lambda \to \infty $, where
\begin{equation}
\label{q_estimate}
| \partial_{ z } q( z, t ) | = O \left( \frac{ \ln( | t | ) }{ | t | } \right), \quad \text{as} \quad t \to \infty
\end{equation}
uniformly on $ z \in \mathbb{C} $.
\end{itemize}
\end{lemma}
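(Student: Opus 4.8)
The plan is to read off the coefficient of $1/\lambda$ in each expansion from the elementary large-$\lambda$ behaviour of the Cauchy transform, and then to reduce every $z$-derivative to an integral of exactly the type already controlled in Lemmas \ref{lin_estimate_lemma} and \ref{estimates_lemma}. For any admissible density $g$, the definition of $A_{z,t}$ together with $\frac{1}{\zeta-\lambda}=-\frac1\lambda-\frac{\zeta}{\lambda^2}-\dots$ gives
\begin{equation*}
(A_{z,t}\cdot g)(\lambda)=\frac{1}{\pi\lambda}\iint\limits_{\mathbb C}r(\zeta)\exp(itS(u,\zeta))\overline{g(\zeta)}\,d\Re\zeta\,d\Im\zeta+O\!\left(\frac{1}{|\lambda|^2}\right)=\frac{1}{\pi\lambda}B_{z,t}\cdot g+O\!\left(\frac{1}{|\lambda|^2}\right),
\end{equation*}
the remainder being controlled by $\iint_{\mathbb C}|r(\zeta)|\,|\zeta|\,d\Re\zeta\,d\Im\zeta<\infty$. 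Hence $a_1=\frac1\pi B_{z,t}\cdot 1$, $a_2=\frac1\pi B_{z,t}\cdot(A_{z,t}\cdot 1)$, and, using $R=\sum_{n\ge2}A^{n}_{z,t}\cdot 1$, also $q=\frac1\pi\sum_{n\ge2}B_{z,t}\cdot A^{n-1}_{z,t}\cdot 1$.

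The single fact that makes the $z$-derivatives tractable is that, since $u=z/t$, differentiating the phase in $z$ produces no factor of $t$: by the chain rule and (\ref{s_function}),
\begin{equation*}
\partial_z\exp(itS(u,\zeta))=-\tfrac{i}{2}\Big(\bar\zeta+\tfrac1\zeta\Big)\exp(itS(u,\zeta)),\qquad \partial_{\bar z}\exp(itS(u,\zeta))=-\tfrac{i}{2}\Big(\zeta+\tfrac1{\bar\zeta}\Big)\exp(itS(u,\zeta)).
\end{equation*}
Thus differentiating any of the integrals above merely multiplies one copy of the density $r$ by one of the $t$-independent factors $-\frac{i}{2}(\bar\zeta+1/\zeta)$ or $-\frac{i}{2}(\zeta+1/\bar\zeta)$. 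These factors are smooth, grow at most linearly as $\zeta\to\infty$, and have at most a simple pole at $\zeta=0$; since $r(\lambda)=\pi\sgn(|\lambda|^2-1)\bar\lambda^{-1}b(\lambda,0)$ with $b(\cdot,0)\in\mathcal S(\mathbb C)$ vanishing to infinite order on $|\lambda|=1$ by (\ref{b_derivs}) and, via the symmetry (\ref{b_symmetry}), to infinite order at $\lambda=0$, the modified densities still satisfy the hypotheses of Lemma \ref{lin_estimate_lemma}. Consequently every differentiated integral remains covered by the estimates of Lemma \ref{estimates_lemma}.

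With this in hand (a) is immediate: $\partial_z a_1=\frac1\pi\partial_z(B_{z,t}\cdot 1)$ equals an integral of the form (\ref{new_form}) with the admissible density $-\frac{i}{2}r(\zeta)(\bar\zeta+1/\zeta)$, so Lemma \ref{lin_estimate_lemma} yields $|\partial_z a_1|=O(\ln|t|/|t|)$ uniformly in $z$. For (b), the product rule applied to $\partial_z(B_{z,t}\cdot A_{z,t}\cdot 1)$ produces two terms: in the first $\partial_z$ hits the outer exponential, giving a $B_{z,t}\cdot(A_{z,t}\cdot 1)$-integral with a modified outer density; in the second it hits the inner factor, where $\partial_z\overline{(A_{z,t}\cdot 1)}=\overline{\partial_{\bar z}(A_{z,t}\cdot 1)}$ and $\partial_{\bar z}(A_{z,t}\cdot 1)$ is again an $A_{z,t}$-integral of $1$ with density $-\frac{i}{2}r(\eta)(\eta+1/\bar\eta)$. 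Both terms are of the form $B_{z,t}\cdot(A_{z,t}\cdot 1)$ with admissible densities and are bounded exactly as in the proof of (\ref{ba_estimate}), giving $|\partial_z a_2|=O(\ln|t|/|t|)$.

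Part (c) is the crux. I would differentiate the series $q=\frac1\pi\sum_{n\ge2}B_{z,t}\cdot A^{n-1}_{z,t}\cdot 1$ termwise. The term $B_{z,t}\cdot A^{n-1}_{z,t}\cdot 1$ carries $n$ exponential factors (one in $B_{z,t}$ and one in each of the $n-1$ nested copies of $A_{z,t}$); applying $\partial_z$ by the product rule yields a sum of $n$ integrals, each of the same nested form with precisely one density multiplied by a $t$-independent factor from the preceding display (the conjugations in the nesting converting some $\partial_z$'s into $\partial_{\bar z}$'s, hence into the second factor). Since the modified densities stay admissible, each of these $n$ integrals obeys the same bound as (\ref{bmember_estimate}), whence
\begin{equation*}
\big|\partial_z\big(B_{z,t}\cdot A^{n-1}_{z,t}\cdot 1\big)\big|\le\frac{n\,C\,\beta^{\,n-1}\ln(3+|t|)}{|t|^{\,1+\lfloor(n-2)/2\rfloor/14}},\qquad |t|\ge1,
\end{equation*}
with $C$ independent of $n$, $z$, $t$. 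The main obstacle is exactly this uniform bookkeeping: one must verify that each of the finitely many derivative terms remains in the admissible class so that Lemma \ref{estimates_lemma}(e) applies verbatim, and that the extra combinatorial factor $n$ does not destroy convergence. The latter is harmless, since $\sum_{n\ge2}n\,C\,\beta^{n-1}|t|^{-\lfloor(n-2)/2\rfloor/14}$ converges for $|t|$ large (just as (\ref{expanded}) converges) to an $O(1)$ quantity. Summing, and noting that the same uniform estimate legitimates termwise differentiation, we obtain that $R$ has the claimed $1/\lambda$ expansion with coefficient $q$ satisfying $|\partial_z q|=O(\ln|t|/|t|)$ uniformly in $z$.
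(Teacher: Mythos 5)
Your proposal is correct and follows essentially the same route as the paper: identify $a_1=\tfrac1\pi B_{z,t}\cdot 1$, $a_2=\tfrac1\pi B_{z,t}\cdot A_{z,t}\cdot 1$, $q=\tfrac1\pi\sum_{k\ge 2}B_{z,t}\cdot A_{z,t}^{k-1}\cdot 1$, and invoke the bounds of Lemma \ref{estimates_lemma} together with the convergence of the tail series. You are in fact more explicit than the paper on the one step it leaves tacit --- that $\partial_z$ acting on $\exp(itS(u,\zeta))$ with $u=z/t$ only multiplies the density by the $t$-independent, still-admissible factor $-\tfrac i2(\bar\zeta+1/\zeta)$ (or its counterpart for $\partial_{\bar z}$), so the same estimates apply to the differentiated integrals, including the harmless combinatorial factor $n$ in part (c).
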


\begin{proof}[Proof of Lemma \ref{a_r_estimates}]
The asymptotics for $ A_{ z, t } \cdot 1 $, $ A_{ z, t }^2 \cdot 1 $ and $ R $ follow from the definitions of $ A_{ z, t } $ and $ R $, formula (\ref{mu_expansion}) and properties (\ref{b_continuity}), (\ref{b_symmetry}). The rest of the proof consists in the following.

\begin{itemize}
\item[\emph{(a)}]
Estimate (\ref{a1_estimate}) follows from (\ref{b_estimate}) and the formula
\begin{equation*}
a_1( z, t ) = \frac{ 1 }{ \pi } \iint\limits_{ \mathbb{C} } r( \zeta  ) \exp( i t S( u, \zeta ) ) d \Re \zeta d \Im \zeta = B_{ z, t } \cdot 1.
\end{equation*}

\item[\emph{(b)}]
Estimate (\ref{a2_estimate}) follows from (\ref{ba_estimate}) and the formula
\begin{equation*}
a_2( z, t ) = \frac{ 1 }{ \pi } \iint\limits_{ \mathbb{C} } r( \zeta, \bar \zeta ) \exp( i t S( u, \zeta, \bar \zeta ) ) ( A_{ z, t } \cdot 1 )( \zeta ) d \Re \zeta d \Im \zeta = B_{ z, t } \cdot A_{ z, t } \cdot 1.
\end{equation*}

\item[\emph{(c)}]
We note that $ q( z, t ) = \sum\limits_{ k = 2 }^{ \infty } a_k( z, t ) $ where $ a_k( z, t ) $ is defined
\begin{equation*}
( A_{ z, t }^{ k } \cdot 1 )( \lambda ) = \frac{ a_k( z, t ) }{ \lambda } + O\left( \frac{ 1 }{ | \lambda |^2 } \right).
\end{equation*}

Next,
\begin{equation*}
a_k( z, t ) = \frac{ 1 }{ \pi }\iint\limits_{ \mathbb{C} } r( \zeta  ) \exp( i t S( u, \zeta ) ) ( A_{ z, t }^{ k - 1 } \cdot 1 )( \zeta ) d \Re \zeta d \Im \zeta.
\end{equation*}

Thus, proceeding as in point \emph{(e)} of lemma \ref{estimates_lemma}, we obtain that $ a_2( z, t ) + a_3( z, t ) = O\left( \frac{ \ln( | t | ) }{ | t | } \right)$ and the rest of the members form a geometric progression that converges to the sum of order $ O\left( \frac{ \ln( | t | ) }{ | t |^{ 1 + 1/14 } } \right) $. \qedhere
\end{itemize}
\end{proof}

Formulas (\ref{mu_expansion}), (\ref{v_main_formula}), (\ref{rest_member_form}) and Lemma \ref{a_r_estimates} imply
\begin{theorem}
Let $ v( x, t ) $ be a solution to the Cauchy problem for the Novikov--Veselov equation (\ref{MNV}) with $ E = 1 $, constructed via (\ref{mu_solve})--(\ref{v_main_formula}) under assumptions (\ref{r_b_connection})--(\ref{b_derivs}). Then
\begin{equation*}
| v( x, t ) | \leqslant \frac{ \const ( v ) \ln( 3 + | t | ) }{ 1 + | t | }, \quad x \in \mathbb{R}^2, t \in \mathbb{R}.
\end{equation*}
\end{theorem}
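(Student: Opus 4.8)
The plan is to assemble the estimate directly from the series representation of $ \mu $ together with the decay bounds already established in Lemma \ref{a_r_estimates}, so that the theorem becomes essentially an exercise in bookkeeping. First I would recall that the potential is recovered through $ v( z, t ) = 2 i \partial_{ z } \mu_{ -1 }( z, t ) $, where $ \mu_{ -1 } $ is the coefficient of $ 1 / \lambda $ in the large-$ \lambda $ expansion (\ref{mu_expansion}) of $ \mu $. Combining this with the decomposition (\ref{rest_member_form}), namely $ \mu = 1 + A_{ z, t } \cdot 1 + R $, and noting that the constant term $ 1 $ carries no $ 1 / \lambda $ part, I would identify $ \mu_{ -1 }( z, t ) = a_1( z, t ) + q( z, t ) $, where $ a_1 $ and $ q $ are precisely the $ 1 / \lambda $-coefficients isolated in parts \emph{(a)} and \emph{(c)} of Lemma \ref{a_r_estimates}.

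Next I would differentiate in $ z $ and invoke the two uniform bounds. Applying $ \partial_{ z } $ term by term gives $ v( z, t ) = 2 i ( \partial_{ z } a_1( z, t ) + \partial_{ z } q( z, t ) ) $; by (\ref{a1_estimate}) and (\ref{q_estimate}) both summands are $ O( \ln( | t | ) / | t | ) $ as $ t \to \infty $, uniformly on $ z \in \mathbb{C} $. Hence $ | v( x, t ) | = O( \ln( | t | ) / | t | ) $ uniformly in $ x $ for large $ | t | $, which is the content of the theorem in the regime $ | t | \to \infty $.

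To promote this into the stated form valid for all $ t \in \mathbb{R} $, I would treat bounded $ t $ separately: on any interval $ | t | \leqslant T $ the solution is smooth with $ \partial_x^j v = O( | x |^{ -3 } ) $, so $ v( \cdot, t ) $ is uniformly bounded there, while the function $ \ln( 3 + | t | ) / ( 1 + | t | ) $ is bounded below by a positive constant on that interval. Absorbing all such bounds into a single $ \const( v ) $, and matching the large-$ t $ behavior $ \ln | t | / | t | \sim \ln( 3 + | t | ) / ( 1 + | t | ) $, then yields the uniform estimate (\ref{main_estimate}).

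The main technical point to justify carefully is that the expansion (\ref{expanded}) may be differentiated in $ z $ coefficient by coefficient and that extraction of the $ 1 / \lambda $ coefficient commutes with $ \partial_z $; this rests on the uniform convergence of (\ref{expanded}) guaranteed by (\ref{member_estimate}) for large $ t $, together with the unique solvability of (\ref{mu_int}) furnished by the theory of generalized analytic functions. Once this interchange is licensed, the estimate follows mechanically, the only genuine analytic work — the stationary-phase analysis controlling $ B_{ z, t } \cdot 1 $ and the higher operator powers — having already been carried out in establishing Lemmas \ref{estimates_lemma} and \ref{a_r_estimates}.
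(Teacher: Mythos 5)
Your proposal is correct and follows essentially the same route as the paper, which derives the theorem directly from (\ref{mu_expansion}), (\ref{v_main_formula}), the decomposition (\ref{rest_member_form}) giving $ \mu_{-1} = a_1 + q $, and the uniform bounds of Lemma \ref{a_r_estimates}. Your extra remarks on interchanging $ \partial_z $ with the series and on handling bounded $ t $ are sensible fillings-in of details the paper leaves implicit.
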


\section{Proofs of Lemmas \ref{lin_lemma} and \ref{lin_estimate_lemma}}
\label{proof_section}
\begin{proof}[Proof of Lemma \ref{lin_lemma}]
Under the additional assumption that $ \xi \neq 0 $ the system of equations (\ref{stationary})--(\ref{degenerate}) is equivalent to the following system
\begin{equation}
\label{xi_system}
\begin{cases}
& \xi^3 - \frac{ \bar u }{ 6 } \xi^2 + \frac{ u }{ 6 } \xi - 1 = 0, \\
& \xi^3 - \frac{ u }{ 6 } \xi + 2 = 0.
\end{cases}
\end{equation}

We claim that $ \xi = \xi_1 $ corresponds to a degenerate stationary point of (\ref{s_function}), iff
\begin{equation}
\label{poly_representation}
\begin{aligned}
& \text{the polynomial $ P( \xi ) = \xi^3 - \frac{ \bar u }{ 6 } \xi^2 + \frac{ u }{ 6 } \xi - 1 $ can be represented } \\
& \text{in the form $ P( \xi ) = ( \xi - \xi_1 )^2( \xi - \xi_2 ) $ }.
\end{aligned}
\end{equation}

Indeed, if $ \xi = \xi_1 \neq 0 $ is a zero of the function $ Q( u, \xi ) = - \frac{ \bar u }{ 2 } + \frac{ u }{ 2 \xi } + 3 \xi - \frac{ 3 }{ \xi^2 } = 3 P( \xi ) / \xi^2 $, then $ Q( u, \xi ) $ is holomorphic in a certain neighborhood of $ \xi = \xi_1 $ and can be expanded into the following Taylor series
\begin{equation*}
Q( u, \xi ) = c_1 ( \xi - \xi_1 ) + c_2( \xi - \xi_1 )^2 + c_3( \xi - \xi_1 )^3 + \ldots
\end{equation*}

Thus $ S'_{ \zeta } $ can be represented as
\begin{equation*}
S'_{ \zeta }( u, \zeta ) = c_1( \zeta^2 - \xi_1 ) + c_2( \zeta^2 - \xi_1 )^2 + c_3( \zeta^2 - \xi_1 )^3 + \ldots
\end{equation*}
After differentiating with respect to $ \zeta $ we obtain
\begin{equation*}
S''_{ \zeta \zeta }( u, \zeta ) = 2 c_1 \zeta  + 4 c_2 ( \zeta^2 - \xi_1 ) \zeta + 6 c_3 ( \zeta^2 - \xi_1 )^2 \zeta + \ldots
\end{equation*}
The stationary point corresponding to $ \xi = \xi_1 $ can be degenerate if and only if $ c_1 = 0 $.

So for the polynomial $ P( \xi ) $ we get the following representation in the neighborhood of $ \xi = \xi_1 $
\begin{equation*}
P( \xi ) = c_2( \xi - \xi_1 )^2 \xi^2 + O( | \xi - \xi_1 |^3 ).
\end{equation*}
As $ P( \xi ) $ is a third--order polynomial, it follows that $ P( \xi ) = ( \xi - \xi_1 )^2 ( \xi - \xi_2 ) $.

Expanding the expression for $ P( \xi ) $ and equating coefficients for the corresponding powers of $ \xi $ results in the following system
\begin{equation}
\label{xi1_xi2_system}
\begin{cases}
& 2 \xi_1 + \xi_2 = \frac{ \bar u }{ 6 }, \\
& 2 \xi_1 \xi_2 + \xi_1^2 = \frac{ u }{ 6 }, \\
& \xi_1^2 \xi_2 = 1.
\end{cases}
\end{equation}

Excluding $ u $ from the system yields
\begin{equation}
\label{xi_system}
\begin{cases}
& 2 \xi_1 \xi_2 + \xi_1^2 = 2 \bar \xi_1 + \bar \xi_2, \\
& \xi_1^2 \xi_2 = 1.
\end{cases}
\end{equation}

We represent $ \xi_1 $ in the form $ \xi_1 = r e^{ i \varphi } $. Then from the second equation in (\ref{xi_system}) we get that $ \xi_2 = r^{ -2 } e^{ - 2 i \varphi } $. Substituting this into the first equation of (\ref{xi_system}) yields
\begin{equation}
\label{key_equation}
( r^2 - r^{-2} ) e^{ 2 i \varphi } = ( r - r^{ -1 } ) e^{ - i \varphi }.
\end{equation}

For $ r = 1 $ equation (\ref{key_equation}) holds for all $ \varphi $. In addition, if $ r \neq 1 $, $ r > 0 $, then equation (\ref{key_equation}) can be rewritten as
\begin{equation*}
( r + r^{ -1 } ) e^{ 3 i \varphi } = 2
\end{equation*}
and has no solutions for real $ \varphi $.

Using now the second equation in (\ref{xi1_xi2_system}) one can see that the set of $ u $ values, for which (\ref{xi_system}), (\ref{xi1_xi2_system}) are solvable and (\ref{poly_representation}) holds, is a curve on the complex plane described in the parametric form by
\begin{equation*}
u = 6 ( 2 e^{ - i \varphi } + e^{ 2 i \varphi } )
\end{equation*}
(see Figure \ref{u_curve}). This curve has three singular points corresponding to $ \varphi = \frac{ 2 \pi k }{ 3 } $, $ k = 0, 1, 2 $. For these values of $ \varphi $ we have that $ e^{ i \varphi } = \xi_1 = \xi_2 = e^{ - 2 i \varphi } $, and $ P( \xi ) $ can be represented in the form $ P( \xi ) = ( \xi - \xi_1 )^3 $. For $ \varphi \neq \frac{ 2 \pi k }{ 3 } $, $ k = 0, 1, 2 $, we have that $ P( \xi ) = ( \xi - \xi_1 )^2 ( \xi - \xi_2 ) $ where $ e^{ i \varphi } = \xi_1 \neq \xi_2 = e^{ - 2 i \varphi } $. The first two statements of the Lemma \ref{lin_lemma} are proved.

\begin{figure}[!h]
\begin{center}
\includegraphics[width=120mm]{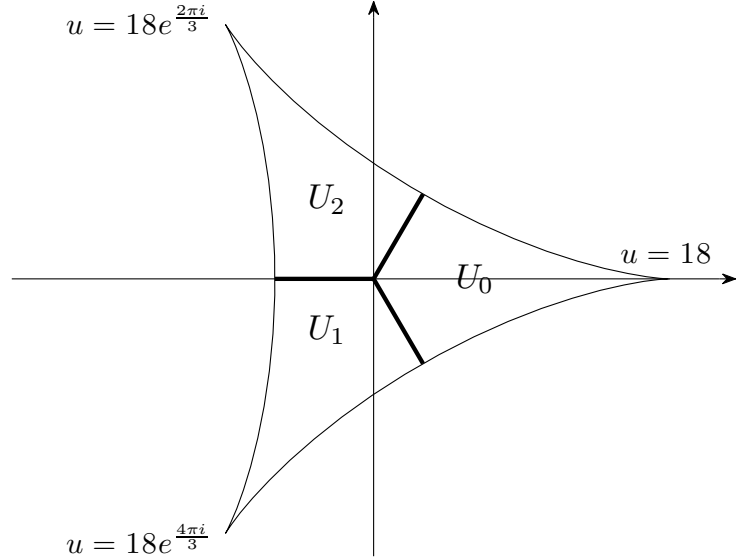}
\caption{The curve $ \mathcal{U} $ on the complex plane.}
\label{u_curve}
\end{center}
\end{figure}

Let us now fix $ \xi = e^{ i \varphi } $ and find the set of $ u $ for which this $ \xi $ is a root of the polynomial $ P( \xi ) $.

One can see that $ \xi $ is the root of $ P( \xi ) $ iff
\begin{equation}
\label{inhom_equation}
\xi \frac{ u }{ 6 } - \xi^2 \frac{ \bar u }{ 6 } = 1 - \xi^3.
\end{equation}
We now solve the homogeneous equation
\begin{equation}
\label{hom_equation}
e^{ i \varphi } \frac{ u }{ 6 } - e^{ 2 i \varphi } \frac{ \bar u }{ 6 } = 0
\end{equation}
with respect to $ u $ to find the plausible perturbations of $ u $ for which $ \xi = e^{ i \varphi } $ remains a root of $ P( \xi ) $.

From (\ref{hom_equation}) we get $ \frac{ u }{ \bar u } = e^{ i \varphi } $, and thus $ u = s e^{ i \varphi / 2 } $ for $ s \in \mathbb{R} $. So for all $ u $ that belong to the line
\begin{equation}
\label{line_equal_roots}
u( \varphi, s ) = 6 ( 2 e^{ - i \varphi } + e^{ 2 i \varphi } ) + s e^{ i \varphi / 2 }, \quad s \in \mathbb{R}
\end{equation}
one of the roots of $ P( \xi ) $ is equal to $ e^{ i \varphi } $.

Now we note that the tangent vector to $ \mathcal{ U } $
\begin{equation*}
\frac{ d }{ d \varphi } ( 2 e^{ - i \varphi } + e^{ 2 i \varphi } ) = 2 i ( e^{ 2 i \varphi } - e^{ - i \varphi } )
\end{equation*}
is collinear to the perturbation vector $ s e^{ i \varphi / 2 } $ for all $ \varphi \neq \frac{ 2 \pi i k }{ 3 } $, $ k = 0, 1, 2 $. Thus $ u( \varphi, s ) $ given by (\ref{line_equal_roots}) is the tangent line to $ \mathcal{U} $ passing through the point $ 6 ( 2 e^{ - i \varphi } + e^{ 2 i \varphi } ) $.

We note that for each $ u \in \inters \mathbb{U} $ there exist two different tangents to the curve $ \mathcal{U} $ passing through this $ u $. Indeed, note that the tangent lines to the curve $ \mathcal{U} $ passing through the points $ u = 18 e^{ \frac{ 2 \pi i k }{ 3 } } $, $ k = 0, 1, 2 $, divide the domain $ \inters \mathbb{U} $ into three parts
\begin{equation*}
\begin{aligned}
& U_0 = \{ u = r e^{ i \varphi }, \; 0 \leqslant r < 6 ( 2 e^{ - i \varphi } + e^{ 2 i \varphi } ), \; \frac{ - \pi }{ 3 } \leq \varphi \leq \frac{ \pi }{ 3 } \},\\
& U_1 = \{ u = r e^{ i \varphi }, \; 0 < r < 6 ( 2 e^{ - i \varphi } + e^{ 2 i \varphi } ), \; \frac{ \pi }{ 3 } < \varphi \leq \pi \},\\
& U_2 = \{ u = r e^{ i \varphi }, \; 0 < r < 6 ( 2 e^{ - i \varphi } + e^{ 2 i \varphi } ), \; \pi < \varphi < \frac{ 5 \pi }{ 3 } \}
\end{aligned}
\end{equation*}
(see Figure \ref{u_curve}). We first study $ U_0 $. Let us consider $ K_{ 01 } $ and $ K_{ 02 } $ which are the sets of points of the following pencils of tangent lines:
\begin{equation*}
\begin{aligned}
& K_{ 01 } = \left\{ u \in \mathbb{C} \colon u = 6 ( 2 e^{ - i \varphi } + e^{ 2 i \varphi } ) + s e^{ i \varphi / 2 }, \quad s \in \mathbb{R}, \quad - \frac{ 2 \pi }{ 3 } \leqslant \varphi < 0 \right\}, \\
& K_{ 02 } = \left\{ u \in \mathbb{C} \colon u = 6 ( 2 e^{ - i \varphi } + e^{ 2 i \varphi } ) + s e^{ i \varphi / 2 }, \quad s \in \mathbb{R}, \quad 0 < \varphi \leqslant \frac{ 2 \pi }{ 3 } \right\}.
\end{aligned}
\end{equation*}
It is easily seen that $ U_0 \subset K_{ 01 } $, $ U_0 \subset K_{ 02 } $, i.e. each point from $ U_0 $ is covered by a certain tangent line from both pencils $ K_{ 01 } $ and $ K_{ 02 } $. It can be shown similarly that each point of $ U_j $, $ j = 1, 2 $, is covered twice by the corresponding tangent lines.

Thus every point from $ \inters \mathbb{U} $ is covered twice which means that for each $ u $ in the domain limited by the curve $ \mathcal{U} $ there exist two different roots of $ P( \xi ) $ equal to $ 1 $ in absolute value. As the product of the roots of the polynomial $ P( \xi ) $ is equal to $ 1 $, the third root of the polynomial is also equal to $ 1 $ in absolute value. We do not consider the values $ u $ from the boundary of $ \mathbb{U} $ which means that the above--mentioned roots correspond to non--degenerate stationary points. Thus the third statement of Lemma \ref{lin_lemma} is proved.

Now suppose $ u \in \mathbb{C} \backslash \mathbb{U} $. We note that every such point $ u $ belongs to one and only one tangent line to the curve $ \mathcal{U} $. That means that for every $ u \in \mathbb{C} \backslash \mathbb{U} $ one of the roots of the polynomial $ P( \xi ) $, $ \xi_1 $, is such that $ | \xi_1 | = 1 $. As $ u \not \in \mathcal{U} $, $ \xi_0 \neq \xi_1 $, $ \xi_2 \neq \xi_1 $. Besides $ | \xi_0 | \neq 1 $ because otherwise there would be two different tangent lines passing through the corresponding point $ u $. That means that for every $ u \in \mathbb{C} \backslash \mathbb{U} $ there exists a root of the polynomial $ P( \xi ) $, namely $ \xi = \xi_0 $, such that $ | \xi_0 | \neq 1 $.

Considering equation (\ref{inhom_equation}) and its conjugate yields the following system of linear equations for $ u $ and $ \bar u $
\begin{equation*}
\begin{cases}
& a u + b \bar u = c, \\
& \bar b \bar u + \bar a \bar u = \bar c,
\end{cases}
\end{equation*}
where $ a = \frac{ \xi }{ 6 } $, $ b = -\frac{ \xi^2 }{ 6 } $, $ c = 1 - \xi^3 $ for each $ \xi \in \mathbb{C} $, $ | \xi | \neq 1 $, $ \xi \neq 0 $. Thus
\begin{equation*}
u = \frac{ c \bar a - \bar c b }{ a \bar a - b \bar b } =  6 \frac{ \bar \xi - \bar \xi \xi^3 + \xi^2 - \xi^2 \bar \xi^3  }{ \xi \bar \xi ( 1 - \xi \bar \xi ) } = 6 \left( \frac{ 1 }{ \xi } + \bar \xi + \frac{ \xi }{ \bar \xi } \right)
\end{equation*}
for each $ \xi \in \mathbb{C} $, $ | \xi | \neq 1 $, $ \xi \neq 0 $.

Now let us consider $ \xi = \xi_0 = ( 1 + \tau ) e^{ i \varphi } $, $ 0 < \tau < +\infty $. Then the corresponding value of the parameter $ u $ is
\begin{equation*}
u = 6 \left( 2 e^{ - i \varphi } + e^{ 2 i \varphi } + \frac{ \tau^2 }{ 1 + \tau } e^{ - i \varphi } \right).
\end{equation*}
In addition to $ \xi = \xi_0 $, for this value of the parameter $ u $ polynomial $ P( \xi ) $ has also a root $ \xi = \xi_1 = e^{ - 2 i \varphi } $. Indeed, if $ u = 6( 2 e^{ - i \varphi } + e^{ 2 i \varphi } ) $, then $ \xi_1 = e^{ - 2 i \varphi } $ is a root of the polynomial $ P( \xi ) $, and the plausible perturbation of this $ u $ for which $ \xi_1 $ remains a root of $ P( \xi ) $ is equal to $ s e^{ - i \varphi } $.

In addition, as the product of the roots of $ P $ is equal to $ 1 $, the third root is $ \xi_2 = ( 1 + \tau )^{ -1 } e^{ i \varphi } $. The fourth statement of Lemma \ref{lin_lemma} is proved.
\end{proof}

\begin{proof}[Proof of lemma \ref{lin_estimate_lemma}]
In this case $ D_{ \varepsilon } $ is the union of disks with a radius of $ \varepsilon $ centered in the stationary points of $ S( u, \zeta ) $. The integral $ I_{int} $ (as in (\ref{int_sum})) is estimated as
\begin{equation*}
| I_{ int } | = \left| \iint\limits_{ D_{ \varepsilon } } r( \zeta ) \exp( i t S( u, \zeta ) ) d \Re \zeta d \Im \zeta \right| \leqslant \const \left| \iint\limits_{ D_{ \varepsilon } } d \Re \zeta d \Im \zeta \right| = O( \varepsilon^2 ).
\end{equation*}

The estimate for $ I_{ ext } $ (as in (\ref{int_sum})) is proved separately for $ u \in \mathbb{U} $ and $ u \in \mathbb{C} \backslash \mathbb{U} $.
\begin{itemize}
\item[I.] $ u \in \mathbb{U} $:

In this case all $ \zeta \in D_{ \varepsilon } $ lie in the $ \varepsilon $--neighborhood of the unit circle. Consequently, from (\ref{r_b_connection}), (\ref{b_continuity}) and (\ref{b_derivs}) it follows that for any $ N \in \mathbb{N} $ and any $ \varepsilon_0 \in ( 0, 1/2 ] $ there exists $ C = C( \varepsilon_0, N ) $ such that
\begin{equation*}
| r( \zeta ) | \leqslant C \rho^N, \quad | r'_{ \zeta }( \zeta ) | \leqslant C \rho^N
\end{equation*}
for all $ \zeta \in D_{ \rho } $, $ \rho \leqslant \varepsilon_0 $.

The function $ S'_{ \zeta }( u, \zeta ) $ can be estimated as
\begin{align*}
& | S'_{ \zeta }( u, \zeta ) | \geqslant 3 \frac{ \varepsilon_0^6 }{ | \zeta |^4 } \quad \text{for} \: \zeta \in \mathbb{C} \backslash D_{ \varepsilon_0 }, \quad \text{and} \\
& | S'_{ \zeta }( u, \zeta ) | \geqslant 3 \frac{ \rho^6 }{ | \zeta |^4 } \quad \text{for} \: \zeta \in \partial D_{ \rho }, \quad \varepsilon \leqslant \rho \leqslant \varepsilon_0.
\end{align*}

Taking $ N = 5 $ results in the following estimate for $ I_1 $
\begin{multline*}
| I_1 | \leqslant \frac{ 1 }{ 2 } \int\limits_{ \partial D_{ \varepsilon } } \frac{ | r( \zeta ) | }{ | S'_{ \zeta }( u, \zeta ) | } d\bar \zeta \leqslant \const \frac{ \varepsilon^N }{ \varepsilon^6 } \int\limits_{ \partial D_{ \varepsilon } } | \zeta |^4 d \bar \zeta \leqslant \\
\leqslant \const \frac{ \varepsilon^N \varepsilon }{ \varepsilon^6 } ( 1 + \varepsilon )^4 = O( 1 ), \quad \text{as} \: \varepsilon \to 0.
\end{multline*}

When estimating $ I_2 $ and $ I_3 $ we integrate separately over $ D_{ \varepsilon_0 } \backslash D_{ \varepsilon } $ and $ \mathbb{C} \backslash D_{ \varepsilon } $:
\begin{multline*}
| I_2 | \leqslant \iint\limits_{ D_{ \varepsilon_0 } \backslash D_{ \varepsilon } } \left| \frac{ r'_{ \zeta }( \zeta ) \exp( i t S( u, \zeta ) ) }{ S'_{ \zeta }( u, \zeta ) } \right| d \Re \zeta d \Im \zeta + \iint\limits_{ \mathbb{C} \backslash D_{ \varepsilon_0 } } \left| \frac{ r'_{ \zeta }( \zeta ) \exp( i t S( u, \zeta ) ) }{ S'_{ \zeta }( u, \zeta ) } \right| d \Re \zeta d \Im \zeta \leqslant \\
\leqslant \const \int\limits_{ \varepsilon }^{ \varepsilon_0 } \frac{ \rho^N \rho }{ \rho^6 } d \rho + \const \iint\limits_{ \mathbb{C} \backslash D_{ \varepsilon_0 } } | r( \zeta ) || \zeta^4 | d \Re \zeta d \Im \zeta = O( 1 ), \quad \text{as} \quad \varepsilon \to 0,
\end{multline*}
\begin{multline*}
| I_3 | \leqslant \iint\limits_{ D_{ \varepsilon_0 } \backslash D_{ \varepsilon } } \left| \frac{ r( \zeta ) \exp( i t S( u, \zeta ) ) S''_{ \zeta \zeta }( u, \zeta ) }{ ( S'_{ \zeta }( u, \zeta ) )^2 } \right| d \Re \zeta d \Im \zeta  + \\
+ \iint\limits_{ \mathbb{C} \backslash D_{ \varepsilon_0 } } \left| \frac{ r( \zeta ) \exp( i t S( u, \zeta ) ) S''_{ \zeta \zeta }( u,\zeta ) }{ ( S'_{ \zeta }( u, \zeta ) )^2 } \right| d \Re \zeta d \Im \zeta \leqslant \\
\leqslant \const \int\limits_{ \varepsilon }^{ \varepsilon_0 } \frac{ \rho^N \rho }{ \rho^{12} } d \rho + \const \iint\limits_{ \mathbb{C} \backslash D_{ \varepsilon_0 } } | r( \zeta ) || \zeta^3 | d \Re \zeta d \Im \zeta \stackrel{ N = 11 }{ = } O( 1 ), \quad \text{as} \quad \varepsilon \to 0.
\end{multline*}

Setting finally $ \varepsilon = \frac{ 1 }{ | t | } $ yields
\begin{equation*}
I( t, u ) = O\left( \frac{ 1 }{ | t | } \right),\quad \text{as} \quad t \to \infty
\end{equation*}
uniformly on $ u \in \mathbb{U} $.

\item[II.] $ u \in \mathbb{C} \backslash \mathbb{U} $:

Let us divide the complex plane into six sets, each containing one and only one stationary point of $ S( u, \zeta ) $: $ \mathbb{C} = \bigcup\limits_{ k = 0 }^{ 2 } \left( Z_k^+ \bigcup Z_k^- \right) $. We define the set $ Z_k^{ \pm } $ as the set of points of the complex plane to which the stationary point $ \pm \zeta_{k} $ is the closest:
\begin{equation*}
\begin{aligned}
& Z_k^+ = \{ \zeta \in \mathbb{C} \colon | \zeta - \zeta_i | \geqslant | \zeta - \zeta_k |, \quad | \zeta + \zeta_j | \geqslant | \zeta - \zeta_k |, \quad i, j \in \{ 0, 1, 2 \} \}, \\
& Z_k^- = \{ \zeta \in \mathbb{C} \colon | \zeta - \zeta_i | \geqslant | \zeta + \zeta_k |, \quad | \zeta + \zeta_j | \geqslant | \zeta + \zeta_k |, \quad i, j \in \{ 0, 1, 2 \} \}
\end{aligned}
\end{equation*}
where $ k \in \{ 0, 1, 2 \} $. We will estimate the integral over each $ \mathcal{Z}_k ^{+}$ separately. The integrals over $ Z_{k}^- $ are treated similarly.

%Let us divide the complex plane $ \mathbb{C} $ into two parts $ \mathbb{C} = C_+ \bigcup C_- $ such that $ \zeta_0, \zeta_1, \zeta_2 \in C_+ $ and $ -\zeta_0, -\zeta_1, -\zeta_2 \in C_- $. We will estimate the integral over $ C_+ $ and observe that the integral over $ C_- $ is estimated similarly.
%
%We divide $ C_+ $ into three parts $ C_+ = \mathcal{Z}_0 \bigcup \mathcal{Z}_1 \bigcup \mathcal{Z}_2 $. The set $ \mathcal{Z}_j $ is the set of all $ \zeta \in C_+ $ for which $ \zeta_j $ is the nearest point among $ \zeta_0 $, $ \zeta_1 $, $ \zeta_2 $.
%
%We can also think of this division in the following terms. Let us consider the triangle with the vertices in $ \zeta_0 $, $ \zeta_1 $ and $ \zeta_2 $. We will assume for definiteness that this triangle is acute. Denote by $ \Gamma_{ j k } = \Gamma_{ k j } $ the middle of the side connecting $ \zeta_j $ and $ \zeta_k $. Now draw the central perpendiculars to the sides of the triangle, and denote by $ O $ their intersection point. The set $ \mathcal{Z}_j $ would be then the set of all points of $ C_1 $ lying between the rays $ O \Gamma_{ j k } $ and $ O \Gamma_{ j l } $ (see fig. \ref{illustration2}).
%
%\begin{figure}[!h]
%\begin{center}
%\includegraphics[width=120mm]{illustration2.eps}
%\caption{Domains $ \mathcal{Z}_0 $, $ \mathcal{Z}_1 $ and $ \mathcal{Z}_2 $.}
%\label{illustration2}
%\end{center}
%\end{figure}

Let us first take $ \zeta \in Z_1^+ $.  Using the definition of $ Z_1^+ $ and the property that all $ \zeta \in D_{ \varepsilon } \bigcap Z_1^+ $ lie in the $ \varepsilon $--neighborhood of the unit circle one can see that the scheme of reasoning for the case I is applicable.

Now let us consider $ \zeta \in Z_0^+ \bigcup Z_2^+ $.

\begin{itemize}
\item[(A)] First, we will study the set of values of parameter $ \omega $ for which $ \zeta_2( u ) $, $ - \zeta_2( u ) $ lie outside the $ 2 \varepsilon_0 $--neighborhood of zero, i.e. $ \frac{ 1 }{ 1 + \omega } > 2 \varepsilon_0 $.

We will consider $ \zeta \in Z_0^+ $ (the case $ \zeta \in Z_2^+ $ is treated similarly). If $ \zeta \in Z_0^+ \bigcap D_{ \varepsilon_0 } $ (for a certain $ \varepsilon_0 $), then it can be represented as
\begin{equation*}
\zeta = ( 1 + \omega )e^{ i \varphi / 2 } + \rho e^{ i \theta }, \quad \rho \leqslant \varepsilon_0.
\end{equation*}

Let us estimate the ratio $ \dfrac{ r( \zeta ) }{ S'_{ \zeta }( \zeta, u ) } $ in $ D_{ \varepsilon_0 } $. If $ \zeta_0 $ belongs to the $ 2 \varepsilon_0 $--neighborhood of $ \zeta_1 $, then all $ \zeta \in Z_0^+ \bigcap D_{ \varepsilon_0 } $ belong to the $ 3 \varepsilon_0 $--neighborhood of $ \zeta_1 $. The following estimates hold:
\begin{equation}
\label{difference_estimates}
| \zeta + \zeta_1 | \geqslant 2 - 3 \varepsilon_0, \quad | \zeta + \zeta_0 | > 2 - \varepsilon_0, \quad | \zeta + \zeta_2 | \geqslant 1 + \varepsilon_0, \quad \xi \in Z_0^+ \bigcap D_{ \varepsilon_0 }.
\end{equation}

Further, we note that for any $ N \in \mathbb{N} $ there exists a function $ \tilde r( \zeta ) $ that can be represented in the form (\ref{r_b_connection}) with a certain $ \tilde b( \zeta ) $ satisfying properties (\ref{b_continuity})--(\ref{b_derivs}), such that $ | r( \zeta ) | \leqslant | \zeta - \zeta_1 |^N | \tilde r( \zeta ) | $ for $ \zeta $ belonging to the $ 3 \varepsilon_0 $--neighborhood of $ \zeta_1 $. This and (\ref{difference_estimates}) imply that
\begin{equation}
\label{ratio_estimate}
\left| \frac{ r( \zeta ) }{ S'_{ \zeta }( \zeta, u ) } \right| \leqslant \const \frac{ | \tilde r( \zeta ) | | \zeta |^4 }{ | \zeta - \zeta_2 | | \zeta - \zeta_0 | }.
\end{equation}

A similar reasoning holds for the case when $ \zeta_0 $ belongs to the $ 2 \varepsilon_0 $--neighborhood of $ - \zeta_1 $. Now if $ \zeta_0 $ does not belong to the $ 2 \varepsilon_0 $--neighborhood of $ \zeta_1 $ and $ - \zeta_1 $, then $ | \zeta - \zeta_1 | \geqslant \varepsilon_0 $ and $ | \zeta + \zeta_1 | \geqslant \varepsilon_0 $ for all $ \zeta \in Z_0^+ \bigcap D_{ \varepsilon_0 } $. Two last estimates of (\ref{difference_estimates}) hold and thus (\ref{ratio_estimate}) holds with $ \tilde r( \zeta ) = r( \zeta ) $.

The difference $ | \zeta - \zeta_2 | $ can be estimated
\begin{equation}
\label{zeta_omega}
| \zeta - \zeta_2 | \geqslant \frac{ 1 }{ 2 } | \zeta_0 - \zeta_2 | = \frac{ \omega ( 2 + \omega ) }{ 2 ( 1 + \omega ) }, \quad \zeta \in Z_0^+.
\end{equation}
In order to get rid of this member in the denominator, let us represent $ \tilde r( \zeta ) $ by the Taylor formula in the neighborhood of $ ( 1 + \omega ) e^{ i \varphi / 2 } $:
\begin{equation*}
\tilde r( \zeta ) = \tilde r( e^{ i \varphi / 2 } + \omega e^{ i \varphi / 2 } ) + \tilde r'( e^{ i \varphi / 2 } + \omega e^{ i \varphi / 2 } + s e^{ i \theta } ) \rho, \quad s = \lambda \rho \text{ for some } \lambda \in [ 0, 1 ],
\end{equation*}
where $ ' $ denotes the derivative with respect to $ s $ and where $ \lambda $ depends, in particular, on $ \rho $.

For an arbitrary value of $ \omega $ the following estimates hold:
\begin{align}
\label{tilde_r_estimate}
& | \tilde r( e^{ i \varphi / 2 } + \omega e^{ i \varphi / 2 } ) | \leqslant \const | \omega |^N, \\
\label{tilde_r_prime_esimate}
& | \tilde r'( e^{ i \varphi / 2 } + \omega e^{ i \varphi / 2 } ) | \leqslant \const | \omega |^N.
\end{align}
This finally yields
\begin{multline*}
\frac{ | \tilde r( \zeta ) | | \zeta |^4 }{ | \zeta - \zeta_2 | | \zeta - \zeta_0 | } \leqslant \const \left( \frac{ | \tilde r( e^{ i \varphi / 2 } + \omega e^{ i \varphi / 2 } ) | }{ \frac{ \omega ( 2 + \omega ) }{ 2 ( 1 + \omega ) } | \zeta - \zeta_0 | } + \frac{ | \tilde r'( e^{ i \varphi / 2 } + \omega e^{ i \varphi / 2 } + \lambda \rho e^{ i \theta } ) | \rho }{ \rho | \zeta - \zeta_0 | } \right) \leqslant \\
\leqslant \frac{ \const }{ | \zeta - \zeta_0 | }, \quad \zeta \in Z_0^+ \bigcap D_{ \varepsilon_0 }.
\end{multline*}

Now we are ready to estimate $ I_1 $:
\begin{equation*}
\int\limits_{ Z_0^+ \bigcap \partial D_{ \varepsilon } } \left| \frac{ r( \zeta ) \exp( i t S( u, \zeta ) ) }{ S'_{ \zeta }( u, \zeta ) } \right| d \bar \zeta \leqslant \const \int\limits_{ Z_0^+ \bigcap \partial D_{ \varepsilon } } \frac{ d \bar \zeta }{ | \zeta - \zeta_0 | } = O\left( 1 \right) \quad \text{as} \quad \varepsilon \to 0.
\end{equation*}

For $ I_2 $ we note that the estimate
\begin{equation*}
\left| \frac{ r'_{ \zeta }( \zeta ) }{ S'_{ \zeta }( u, \zeta ) } \right| \leqslant \frac{ \const }{ | \zeta - \zeta_0 | }, \quad \zeta \in Z_0^+ \bigcap D_{ \varepsilon_0 },
\end{equation*}
can be obtained using the same reasoning as for the ratio $ \dfrac{ r( \zeta ) }{ S'_{ \zeta }( u, \zeta ) } $. Thus
\begin{multline*}
\iint\limits_{ Z_0^+ \backslash D_{ \varepsilon } } \left| \frac{ r'_{ \zeta }( \zeta ) \exp( i t S( u, \zeta ) ) }{ S'_{ \zeta }( u, \zeta ) } \right| d \Re \zeta d \Im \zeta  \leqslant \const \Biggl( \iint\limits_{ Z_0^+ \backslash D_{ \varepsilon_0 } } | r'_{ \zeta }( \zeta ) | | \zeta |^4 d \Re \zeta d \Im \zeta + \\
+ \iint\limits_{ Z_0^+ \bigcap ( D_{ \varepsilon_0 } \backslash D_{ \varepsilon } ) } \frac{ d \Re \zeta d \Im \zeta }{ | \zeta - \zeta_0 | } \Biggr) = O( 1 ) \quad \text{as} \quad \varepsilon \to 0.
\end{multline*}

In order to estimate $ \dfrac{ r( \zeta ) }{ ( S'_{ \zeta }( u, \zeta ) )^2 } $ in $ D_{ \varepsilon_0 } $ we take the members in the Taylor formula for $ \tilde r( \zeta ) $ up to the second order:
\begin{equation}
\label{taylor_second_order}
\tilde r( \zeta ) = \tilde r( e^{ i \varphi / 2 } + \omega e^{ i \varphi / 2 } ) + \tilde r'( e^{ i \varphi / 2 } + \omega e^{ i \varphi / 2 } ) \rho + \frac{ 1 }{ 2 }\tilde r''( e^{ i \varphi / 2 } + \omega e^{ i \varphi / 2 } + \lambda \rho e^{ i \theta } ) \rho^2.
\end{equation}
From formulas (\ref{ratio_estimate})--(\ref{taylor_second_order}) it follows that
\begin{multline*}
\left| \frac{ r( \zeta ) }{ ( S'_{ \zeta }( u, \zeta ) )^2 } \right| \leqslant \frac{ | \tilde r( \zeta ) | | \zeta |^8 }{ | \zeta - \zeta_2 |^2 | \zeta - \zeta_0 |^2 } \leqslant \const \Biggl( \frac{ | \tilde r( e^{ i \varphi / 2 } + \omega e^{ i \varphi / 2 } ) | }{ \left( \frac{ \omega ( 2 + \omega ) }{ 2 ( 1 + \omega ) } \right)^2 | \zeta - \zeta_0 |^2 } + \\
+ \frac{ | \tilde r'( e^{ i \varphi / 2 } + \omega e^{ i \varphi / 2 } ) | \, \rho }{ \left( \frac{ \omega ( 2 + \omega ) }{ 2 ( 1 + \omega ) } \right)^2 | \zeta - \zeta_0 |^2 } + \frac{ | \tilde r''( e^{ i \varphi / 2 } + \omega e^{ i \varphi / 2 } + \lambda \rho e^{ i \theta } ) | \rho^2 }{ 2 \rho^2 | \zeta - \zeta_0 |^2 } \Biggr) \leqslant \\
\leqslant \frac{ \const }{ | \zeta - \zeta_0 |^2 }, \quad \zeta \in Z_0^+ \bigcap D_{ \varepsilon_0 }.
\end{multline*}
Thus
\begin{multline*}
\iint\limits_{ Z_0^+ \bigcap ( D_{ \varepsilon_0 } \backslash D_{ \varepsilon } ) } \left| \frac{ r( \zeta ) \exp( i t S( u, \zeta ) ) S''_{ \zeta \zeta }( u, \zeta ) }{ ( S'_{ \zeta }( u, \zeta ) )^2 } \right| d \Re \zeta d \Im \zeta \leqslant \\
\leqslant \const \int\limits_{ \varepsilon }^{ \varepsilon_0 } \frac{ d \rho }{ \rho } = \const \ln \frac{ \varepsilon_0 }{ \varepsilon }.
\end{multline*}
Setting $ \varepsilon = \frac{ 1 }{ | t | } $ yields $ I_3 = O( \ln( | t | ) ) $, as $ t \to \infty $.

\item[(B)] Now let $ \dfrac{ 1 }{ 1 + \omega } \leqslant 2 \varepsilon_0 $.

If $ \zeta \in Z_0^+ \bigcap D_{ \varepsilon_0 } $, then the following estimates hold
\begin{equation*}
| \zeta \pm \zeta_1 | \geqslant \frac{ 1 }{ 2 \varepsilon_0 } - 1 - \varepsilon_0, \quad | \zeta \pm \zeta_2 | \geqslant \frac{ 1 }{ 2 \varepsilon_0 } - 3 \varepsilon_0, \quad | \zeta + \zeta_0 | > 2 - \varepsilon_0.
\end{equation*}

Consequently,
\begin{equation*}
| S'_{ \zeta }( u, \zeta ) | \geqslant \frac{ \const }{ | \zeta |^4 } | \zeta - \zeta_0 |, \quad \zeta \in Z_0^+ \bigcap D_{ \varepsilon_0 }.
\end{equation*}
and the part of the integral $ I_{ext} $ over $ Z_0^+ $ for this case can be estimated, proceeding as in the previous section, as $ O\left( \frac{ \ln( | t | ) }{ | t | } \right) $, $ t \to \infty $.

If $ \zeta \in Z_2^+ \bigcap D_{ \varepsilon_0 } $, then the following estimates hold
\begin{equation*}
| \zeta \pm \zeta_1 | \geqslant 1 - 3 \varepsilon_0, \quad | \zeta \pm \zeta_0 | \geqslant 1 - 3 \varepsilon_0,
\end{equation*}
and thus
\begin{equation*}
| S'_{ \zeta }( u, \zeta ) | \geqslant \frac{ \const }{ | \zeta |^4 } | \zeta - \zeta_2 | | \zeta + \zeta_2 |.
\end{equation*}

We can estimate $ | \zeta + \zeta_2 | \geqslant | \zeta_2 | = \dfrac{ 1 }{ 1 + \omega } $. Now let us expand $ r( \zeta ) $ into Taylor formula in the neighborhood of $ \frac{ 1 }{ 1 + \omega } e^{ i \varphi / 2 } $:
\begin{equation*}
r( \zeta ) = r\left( \frac{ 1 }{ 1 + \omega } e^{ i \varphi / 2 } \right) + r'\left( \frac{ 1 }{ 1 + \omega } e^{ i \varphi / 2 } + \lambda \rho e^{ i \theta } \right) \rho, \quad \lambda \in [ 0, 1 ].
\end{equation*}
For an arbitrary value of $ \omega > 0 $ (satisfying $ \frac{ 1 }{ 1 + \omega } \leqslant 2 \varepsilon_0 $) the following estimate holds:
\begin{equation*}
\left| r\left( \frac{ 1 }{ 1 + \omega } e^{ i \varphi / 2 }  \right) \right| \leqslant \const \left| \frac{ 1 }{ 1 + \omega } \right|^N.
\end{equation*}

This yields
\begin{equation*}
\left| \frac{ r( \zeta ) }{ S'_{ \zeta }( u, \zeta ) } \right| \leqslant  \const \left( \frac{ | r( \frac{ 1 }{ 1 + \omega } e^{ i \varphi / 2 } ) | }{ \frac{ 1 }{ 1 + \omega } | \zeta - \zeta_2 | } + \frac{ | r'( \frac{ 1 }{ 1 + \omega } e^{ i \varphi / 2 } + \lambda \rho e^{ i \theta } ) | \rho }{ \rho | \zeta - \zeta_2 | } \right) \leqslant \frac{ \const }{ | \zeta - \zeta_2 | }.
\end{equation*}
In the same manner
\begin{multline*}
\left| \frac{ r( \zeta ) }{ ( S'_{ \zeta }( u, \zeta ) )^2 } \right| \leqslant \frac{ | r( \zeta ) | | \zeta |^8 }{ | \zeta + \zeta_2 |^2 | \zeta - \zeta_2 |^2 } \leqslant \const \Biggl( \frac{ | r( \frac{ 1 }{ 1 + \omega } e^{ i \varphi / 2 } ) | }{ \left( \frac{ 1 }{ 1 + \omega } \right)^2 | \zeta - \zeta_2 |^2 } + \\
+ \frac{ | r'( \frac{ 1 }{ 1 + \omega } e^{ i \varphi / 2 } ) | \, \rho }{ \left( \frac{ 1 }{ 1 + \omega } \right)^2 | \zeta - \zeta_2 |^2 } + \frac{ | r''( \frac{ 1 }{ 1 + \omega } e^{ i \varphi / 2 } + \lambda \rho e^{ i \theta } ) | \rho^2 }{ 2 \rho^2 | \zeta - \zeta_2 |^2 } \Biggr) \leqslant \frac{ \const }{ | \zeta - \zeta_2 |^2 }.
\end{multline*}
Following further the reasoning from the case (A) we obtain that
\begin{equation*}
I_{ ext } = O\left( \frac{ \ln( | t | ) }{ | t | } \right), \quad \text{as} \quad t \to \infty
\end{equation*}
uniformly on $ u \in \mathbb{C} $. \qedhere
\end{itemize}
\end{itemize}
\end{proof}

\end{document}